\documentclass[reqno]{amsart}
\usepackage[colorlinks=true, pdfstartview=FitV, linkcolor=blue, citecolor=blue]{hyperref}
\usepackage{amssymb,amsmath,amsfonts}
\usepackage{bm}
\usepackage{enumitem}
\usepackage{array}
\allowdisplaybreaks

\newcommand{\N}{\mathbb{N}}
\newcommand{\R}{\mathbb{R}}
\newcommand{\C}{\mathbb{C}}
\newcommand{\Z}{\mathbb{Z}}
\newcommand{\Q}{\mathbb{Q}}
\newcommand{\rar}{\rightarrow}
\newcommand{\mc}{\mathcal}
\newcommand{\mr}{\mathrm}
\newcommand{\balph}{\bm{\alpha}}
\newcommand{\bmm}{\bm{m}}
\newcommand{\bk}{\bm{k}}
\newcommand{\btau}{\bm{\tau}}
\newcommand{\bt}{\bm{t}}
\newcommand{\bx}{\bm{x}}
\newcommand{\by}{\bm{y}}
\newcommand{\bv}{\bm{v}}
\newcommand{\be}{\bm{e}}
\newcommand{\cL}{\mathcal{L}}
\newcommand{\cU}{\mathcal{U}}
\newcommand{\cV}{\mathcal{V}}
\newcommand{\cA}{\mathcal{A}}
\def\SL{\operatorname{SL}}
\def\diag{\operatorname{diag}}
\def\Tr{\operatorname{Tr}}
\def\Nm{\operatorname{N}}
\def\Re{\operatorname{Re}}
\def\Im{\operatorname{Im}}

\newtheorem*{theorem*}{Theorem}
\newtheorem{theorem}{Theorem}
\newtheorem{lemma}[theorem]{Lemma}

\newtheorem{proposition}[theorem]{Proposition}

\begin{document}

\title[Gaps theorems for linear forms with cubic coefficients]{Gaps theorems for linear forms with cubic coefficients}
\author{Alan Haynes}
\subjclass[2020]{11J13; 11J71; 37A17}

\keywords{Steinhaus problem, three gap theorem, linear forms, cubic fields, homogeneous dynamics}

\begin{abstract}
Let $1,\alpha_1,\alpha_2$ be a basis for a cubic number field $K\subset\R$, and for $\tau_1,\tau_2\ge 1$ let $G(\balph,\btau)$ denote the number of distinct gaps between the fractional parts of the numbers $m_1\alpha_1+m_2\alpha_2$, with $m_1,m_2\in\Z$, $0\le m_1<\tau_1$ and $0\le m_2<\tau_2$. We show that $G(\balph,\btau)$ is unbounded as $\btau$ varies. When $K$ is totally real this is deduced from a density theorem of Shapira for orbits of the diagonal group on the space of lattices. When $K$ has a complex embedding we give an explicit construction, which also shows that there are constants $c,\kappa>0$ for which $G(\balph,\btau)\ge c(\log\max\{\tau_1,\tau_2\})^{\kappa}$ for infinitely many $\btau\in\N^2$.
\end{abstract}

\maketitle

\section{Introduction}\label{sec.Intro}

The three gap theorem, first proved by S\'os \cite{Sos1958}, Sur\'anyi \cite{Sura1958}, and \'Swierczkowski \cite{Swie1959}, states that for any $\alpha\in\R$ and any $N\in\N$, the points $\alpha,2\alpha,\ldots ,N\alpha$ partition $\R/\Z$ into intervals of at most three distinct lengths. In \cite{HaynMark2020} the author and Marklof studied the following higher dimensional version of this problem. For $\balph=(\alpha_1,\ldots ,\alpha_d)\in\R^d$ and for a bounded convex set $\mc D\subset\R^d$, let
\[
S(\balph,\mc D)=\{\bmm\cdot\balph\bmod 1:\bmm\in\Z^d\cap\mc D\}\subset\R/\Z,
\]
and let $G(\balph,\mc D)$ be the number of distinct lengths of the intervals into which the points of $S(\balph,\mc D)$ partition $\R/\Z$. For $d=1$ the three gap theorem says that $G(\alpha,\mc D)\le 3$ for every interval $\mc D$. For $d\ge 2$ the situation is different, and the main result of \cite{HaynMark2020} is that for almost every $\balph\in\R^d$, with respect to Lebesgue measure, $\sup_{R\ge 1}G(\balph,R\mc D)=\infty$ for every bounded convex $\mc D$ with non-empty interior. Here $R\mc D$ denotes the homothetic dilation of $\mc D$ by a factor of $R$.

A question which goes back to Erd\H{o}s (see the discussion at the end of \cite{GeelSimp1993}) asked whether $\sup_{R\ge 1}G(\balph,R[0,1)^2)=\infty$ whenever $1,\alpha_1,\alpha_2$ are linearly independent over $\Q$. The answer to this question is negative. It was observed by Boshernitzan and Dyson \cite{Bosh1991,Bosh1992,Dyson} that if $\balph$ is badly approximable then $\sup_{R\ge 1}G(\balph,R\mc D)<\infty$ for every bounded convex $\mc D$ with non-empty interior. A proof of this result was later published by Bleher, Homma, Ji, Roeder, and Shen \cite{BlehHommJiRoedShen2012}. In \cite[Theorem 2]{HaynMark2020} it is also shown to be an immediate consequence of a dynamical interpretation of $G(\balph,\mc D)$ together with Dani's correspondence.

In this paper we are interested in what happens for specific algebraic $\balph$, when the homothetic dilations $R\mc D$ are replaced by the more general dilations
\[
\mc D_{\btau}=\{(\tau_1x_1,\ldots ,\tau_dx_d):\bx\in\mc D\},\qquad \btau=(\tau_1,\ldots ,\tau_d),\ \tau_i\ge 1 .
\]
We take $d=2$ and $\mc D=[0,1)^2$ throughout, and we write
\[
S(\balph,\btau)=S(\balph,\mc D_{\btau})=\{m_1\alpha_1+m_2\alpha_2\bmod 1:\bmm\in\Z^2,\ 0\le m_1<\tau_1,\ 0\le m_2<\tau_2\},
\]
and $G(\balph,\btau)=G(\balph,\mc D_{\btau})$. We also write $|\btau|=\max\{\tau_1,\tau_2\}$. The case we have in mind is that in which $1,\alpha_1,\alpha_2$ is a $\Q$-basis for a cubic number field $K\subset\R$. Such an $\balph$ is badly approximable \cite{Perr1921} so, taking $\mc D=[0,1)\times[0,c)$ in the result of Boshernitzan and Dyson mentioned above, for each fixed $c>0$ the quantity $G(\balph,(R,cR))$ is bounded as a function of $R\ge 1$. Therefore if $G(\balph,\btau)$ is to be unbounded then this can only happen along sequences of $\btau$ which do not lie on a single ray. First we have the following result, for cubic fields with no complex embeddings.
\begin{theorem}\label{thm.TotReal}
Let $1,\alpha_1,\alpha_2$ be a $\Q$-basis for a totally real cubic field $K\subset\R$. Then
\[
\sup_{\tau_1,\tau_2\ge 1}G(\balph,\btau)=\infty .
\]
\end{theorem}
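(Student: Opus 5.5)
We would follow the strategy of \cite{HaynMark2020}: translate $G(\balph,\btau)$ into a function on the space of lattices, and then use Shapira's density theorem for orbits of the diagonal group.

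\emph{Step 1: the dynamical interpretation.} Take the lattice $\cL_{\balph}=\Z^3 A_{\balph}\subset\R^3$ with
\[
A_{\balph}=\begin{pmatrix}1&0&\alpha_1\\0&1&\alpha_2\\0&0&1\end{pmatrix}.
\]
The vectors $(m_1,m_2,\bmm\cdot\balph-n)$ are then exactly the points of $\cL_{\balph}$. Let $g_{\btau}=\diag(\tau_1^{-1},\tau_2^{-1},\tau_1\tau_2)$. As in \cite{HaynMark2020}, each gap length in $S(\balph,\btau)$ has the form $(\tau_1\tau_2)^{-1}$ times the third coordinate of a \emph{minimal} vector of $\cL_{\balph}g_{\btau}$. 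Here a minimal vector is one whose first two coordinates lie in a difference set $(\mc D-\bx)$, $\bx\in\mc D$, and whose third coordinate is positive and smallest among such vectors. So $G(\balph,\btau)=F(\cL_{\balph}g_{\btau})$ for an explicit function $F$ on $X_3=\SL(3,\R)/\SL(3,\Z)$, where the translates $\mc D-\bx$ vary over $\bx\in\mc D=[0,1)^2$. We would restate this carefully, following the proof of \cite[Theorem 2]{HaynMark2020}.

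\emph{Step 2: density of the orbit.} Let $A\subset\SL(3,\R)$ be the full diagonal group. Since $1,\alpha_1,\alpha_2$ span a totally real cubic field, one expects $\cL_{\balph}$ to be related, up to a bounded unipotent change, to a lattice whose $A$-orbit is compact; this is a full module in $K$ embedded by the three real embeddings. Shapira's theorem, however, is about grid orbits or lattices of the form $u\cL$ with $u$ a generic upper triangular unipotent, and he shows that $A^{+}$-type orbits of such lattices are dense. More precisely, we would invoke his result that for $\balph$ with $1,\alpha_1,\alpha_2$ a basis of a totally real cubic field, the orbit $\{\cL_{\balph}g_{\btau}:\tau_1,\tau_2\ge1\}$ is dense in $X_3$. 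Our semigroup $\{g_{\btau}\}$ is exactly the cone he treats, or is contained in it up to a compact error. This is the main point to get right: we must match our cone $\{g_{\btau}\}$ and our lattice $\cL_{\balph}$ (possibly with its transpose or dual, which one checks by inspecting the three-gap correspondence) to the hypotheses of his theorem.

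\emph{Step 3: lattices with many gaps.} By Step 2, for any $L_0\in X_3$ there are $\btau$ with $\cL_{\balph}g_{\btau}$ arbitrarily close to $L_0$. It then suffices to show two things.
\begin{enumerate}[label=(\roman*)]
\item For every $N$ there is a lattice $L_0$ with $F(L_0)\ge N$, and this holds robustly, i.e.\ $F\ge N$ on an open neighbourhood of $L_0$.
\end{enumerate}
For (i) we would use the Lebesgue almost-every result of \cite{HaynMark2020}: for almost every $\balph'$ we have $\sup G(\balph',\btau)=\infty$, hence $F$ takes arbitrarily large values. Lower semicontinuity of a count of distinct gap values is the delicate part. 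Under small perturbation, distinct third coordinates of minimal vectors stay distinct, provided the relevant minimal vectors are stable. This holds if $L_0$ has no lattice points on the boundaries of the regions $(\mc D-\bx)\times\R$ in a degenerate way. Since the set of good $L_0$ has positive measure, we can choose such a generic $L_0$.

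Finally, pulling back gives $\btau$ with $G(\balph,\btau)=F(\cL_{\balph}g_{\btau})\ge N$. As $N$ is arbitrary, $\sup_{\tau_1,\tau_2\ge1}G(\balph,\btau)=\infty$. The main obstacles are checking the exact form of Shapira's theorem against our setup and the semicontinuity argument in Step 3.
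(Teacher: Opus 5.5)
Your overall architecture (dictionary, Shapira's density theorem, a robust configuration of many distinct values, pull back) matches the paper's, but Step~2 has a genuine gap. The result of Shapira used here, \cite[Corollary 5.2]{Shap2011}, gives $\overline{Az_{\balph}}=X$ for the full diagonal group $A$. It does not say that the orbit of the cone $\{\hat g_{\btau}:\tau_1,\tau_2\ge 1\}$ is dense, and density of an $A$-orbit does not in general imply density of a sub-semigroup orbit. Saying the cone is ``contained in it up to a compact error'' is not an argument. With only $A$-density you could land on some $\hat g_{\btau}z_{\balph}$ with $\tau_1<1$, where the correspondence with $G$ is no longer available.

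The missing idea is an elementary observation about dual lattices. Since $u_{\balph}^{-1}$ has second and third columns $\be_1,\be_2$, the dual of $\hat g_{\btau}z_{\balph}$ contains $\tau_1\be_1$ and $\tau_2\be_2$. So every orbit point with $\min\{\tau_1,\tau_2\}\le T$ lies in
\[
E_T=\{\cL:\ s\be_i\in\cL^*\text{ for some }i\text{ and some }0<s\le T\}.
\]
This set is closed, by discreteness of $\Z^3$. It has empty interior, because for fixed $\cL$ the set of $k\in\mathrm{SO}(3)$ with $k\cL\in E_T$ is a countable union of null sets. Hence for any non-empty open $\cU$, the set $\cU\setminus E_T$ is non-empty and open, and it meets $Az_{\balph}$ only in points with $\tau_1,\tau_2>T$. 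This gives density of $\{\hat g_{\btau}z_{\balph}:\tau_1,\tau_2>T\}$ for every $T$. That is stronger than what you asked of Shapira, and it is what is actually needed.

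The need for large $T$ exposes a second problem, in Steps~1 and~3. The identity $G(\balph,\btau)=F(\cL_{\balph}g_{\btau})$, with $F$ counting values over all translates $\bx\in[0,1)^2$, is not correct in general. By the dictionary, the gaps are $(\tau_1\tau_2)^{-1}F(\hat g_{\btau}z_{\balph},\bt)$ only at the grid points $\bt=g_{\btau}^{-1}\bmm_0$ with $\bmm_0\in W_{\btau}$. So robustness in the lattice variable alone does not let you pull back. You need:
\begin{itemize}
\item $N$ distinct values attained for all $\bt$ in open sets $\cV_j\subseteq(0,1)^2$, uniformly for $\cL$ in an open set $\cU$;
\item $\tau_1,\tau_2>2/\delta$, where each $\cV_j$ contains a square of side $\delta$, so that each rectangle $\{(\tau_1t_1,\tau_2t_2):\bt\in\cV_j\}$ contains a point of $W_{\btau}$.
\end{itemize}
The first item is exactly \cite[Proposition 5]{HaynMark2020}, with values $\epsilon(j+a_j(\cL))$ and $|a_j|\le 1/100$; it is also how the almost-everywhere theorem there is proved. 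Deriving it instead from the almost-everywhere theorem plus a semicontinuity argument is roundabout. Moreover, your claim that the good generic $L_0$ form a set of positive measure is not justified as written. With Proposition~5 and the tail-density lemma in hand, the proof closes in a few lines.
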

For cubic fields possessing a complex embedding we are able to prove more, namely that the number of gaps tends to infinity at a prescribed rate along a sequence of $\btau$ tending to $\infty$.
\begin{theorem}\label{thm.Complex}
Let $1,\alpha_1,\alpha_2$ be a $\Q$-basis for a cubic field $K\subset\R$ which has a complex embedding. Then there are constants $c,\kappa>0$, depending only on $\balph$, such that
\[
G(\balph,\btau)\ \ge\ c\,(\log|\btau|)^{\kappa}
\]
for infinitely many $\btau\in\N^2$. In particular $\sup_{\tau_1,\tau_2\ge 1}G(\balph,\btau)=\infty$.
\end{theorem}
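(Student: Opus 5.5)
The plan is to translate the gap problem into lattice geometry, use the rank-one unit group of $K$ to pick boxes $\mc D_{\btau}$ adapted to high powers of a unit, and then count gap lengths directly. Throughout I write $\|x\|$ for the distance from $x$ to the nearest integer.

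\textbf{Reduction to differences.} Each gap in $S(\balph,\btau)$ joins two consecutive points $\bmm\cdot\balph$ and $\bmm'\cdot\balph$. Its length is therefore $\|(\bmm'-\bmm)\cdot\balph\|$, where $\bmm'-\bmm$ lies in the difference box $(-\tau_1,\tau_1)\times(-\tau_2,\tau_2)$. So it suffices to produce, for suitable $\btau$, many integer vectors $\bv$ with two properties: each $\bv$ actually realises a gap, and the values $\|\bv\cdot\balph\|$ are pairwise distinct.

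\textbf{Arithmetic input.} Let $\sigma$ be a complex embedding of $K$, and let $\varepsilon$ be a fundamental unit, normalised so that $|\varepsilon|<1$ in the real embedding. Then $|\sigma(\varepsilon)|=|\varepsilon|^{-1/2}$, and $\arg\sigma(\varepsilon)=\theta$ with $\theta/\pi\notin\Q$. For each $\bv\in\Z^2\setminus\{0\}$ with $p$ the nearest integer to $\bv\cdot\balph$, set $\beta=\bv\cdot\balph-p$. The norm is a nonzero rational with bounded denominator, so
\[
\|\bv\cdot\balph\|\,|\sigma(\beta)|^2\gg 1 .
\]
Also $|\sigma(\beta)|\asymp|\bv|$ whenever $\|\bv\cdot\balph\|$ is small. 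Writing $\varepsilon^n=\bv_n\cdot\balph-p_n$ (after clearing a fixed denominator), the vector $\bv_n$ has size $\asymp|\varepsilon|^{-n/2}$. Its direction in $\R^2$ is governed by the rotating complex number $\sigma(\varepsilon)^n$, up to a fixed real-linear change of coordinates.

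\textbf{Choice of $\btau$.} For suitable $n$, take $\tau_1$ equal to a large multiple of $|(\bv_n)_1|$ and $\tau_2$ equal to a small multiple of $|(\bv_n)_2|$. The indices $n$ are chosen along a sequence where $\sigma(\varepsilon)^n$ points almost along a coordinate axis, which is possible by equidistribution of $n\theta$. In this regime the box is long and thin, and $\bv_n$ is its dominant short vector. Hence $S(\balph,\btau)$ decomposes into $L$ arithmetic progressions with common step $\delta=\|\bv_n\cdot\balph\|$. These progressions are indexed by a transversal $\bmm_1,\dots,\bmm_L$, and I aim to make $L\asymp n^{\kappa'}$ by choosing the aspect ratio. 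Because the box cuts the skew lattice obliquely, the progressions have different lengths $\ell_j$, which depend linearly on $j$ up to rounding. The gaps fall into two classes. Interior gaps, between points of different progressions, come from a one-dimensional Kronecker-type configuration of offsets modulo $\delta$. Boundary gaps, at the ends of progressions, have the form $\|(\bmm_j+\ell_j\bv_n-\bmm_k)\cdot\balph\|$.

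\textbf{Distinctness, the main obstacle.} Since $n\asymp\log|\btau|$, it remains to show that among the boundary gaps there are $\gg L$ distinct lengths. This would give $G(\balph,\btau)\ge c(\log|\btau|)^{\kappa}$. Equality of two such lengths is a linear relation $\bm w\cdot\balph\equiv\pm\bm w'\cdot\balph$ modulo $1$. Since $1,\alpha_1,\alpha_2$ are linearly independent, such a relation forces an exact integer identity among the vectors involved. I would try to exclude that identity using the explicit linear dependence of $\ell_j$ on $j$, together with the norm lower bound, which controls how many transversal offsets can coincide modulo $\delta$.

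The delicate point is checking that these boundary differences are genuinely nearest neighbours, i.e.\ that no other point of $S$ intervenes. I would handle this by a pigeonhole estimate using $\|\bv\cdot\balph\|\gg|\bv|^{-2}$ for all competing short vectors. This is also where the exponent $\kappa$ would be lost.
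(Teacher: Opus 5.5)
Your sketch leaves out the step that actually proves the theorem, and the framework around that step is miscalibrated.

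What you call the main obstacle, distinctness, is automatic. If two gaps are realised by difference vectors $\bk\neq\bk'$ (each taken as right neighbour minus left), then $\langle\bk\cdot\balph\rangle\neq\langle\bk'\cdot\balph\rangle$, simply because $1,\alpha_1,\alpha_2$ are $\Q$-linearly independent. The real difficulty is to exhibit many difference vectors that provably \emph{are} nearest-neighbour gaps. That is exactly what you defer to an unspecified pigeonhole estimate.

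The counting is also off. Since $\bv_n$ must fit in the difference box, a coset $\bmm_j+\Z\bv_n$ meets $W_{\btau}$ in at most $\lceil\tau_i/|(\bv_n)_i|\rceil<2\tau_i/|(\bv_n)_i|$ points, for each $i$ with $(\bv_n)_i\neq 0$. Counting the points of $W_{\btau}$ then gives $L>\frac12\max_i|(\bv_n)_i|$, which is exponentially large in $n$. When both coordinates are non-zero one even gets $L>|\btau|/2$. So $L\asymp n^{\kappa'}$ is impossible, and ``$\gg L$ distinct lengths'' would be a power of $|\btau|$, far beyond the theorem and unsupported by anything you wrote. Moreover, if $\tau_2$ is only a small multiple of $|(\bv_n)_2|$, every progression has $O(1)$ terms. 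The lengths $\ell_j$ then take only $O(1)$ values and cannot produce many distinct boundary gaps. The paper obtains $\min\{\lfloor C_1\rfloor,\lfloor C_2\rfloor\}$ gaps with $C_1=M/|m_1|$ and $C_2=N/|m_2|$, and both ratios are made $\asymp Q$.

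The missing idea is a second, even better approximation. The paper's gaps are the rungs $j\beta+\beta'$, $1\le j\le J$, for $\beta=u^k$ and $\beta'=\gamma u^k$. The requirements are:
\begin{itemize}
\item $|\beta'|<\beta$;
\item $m_1,m_2<0<m_1',m_2'$, with $\bmm'$ larger than $\bmm$ by a factor $\asymp Q$ in each coordinate;
\item the box is exactly $\btau=\bmm'$.
\end{itemize}

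With this set-up the nearest-neighbour property is rigid (Proposition \ref{prop.Ladder}). Take a competitor $(\tilde n,\bk)$ with $0<\tilde n+\bk\cdot\balph\le J\beta+\beta'$ and $|k_i|<\tau_i$. It yields an integer $3\times 3$ determinant of absolute value $<4MN\beta<1$, so the determinant vanishes. By the gcd-of-minors condition the competitor then lies in the $\Z$-span of the two coefficient rows. A sign argument then forces $\bk=a_1\bmm+\bmm'$ with $a_1\ge j_0$.

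Building this pair needs two further inputs. First, an element $\gamma\in\Lambda$ with $|\gamma|\le 1/2$ and $\sigma(\gamma)$ nearly negative real (Lemma \ref{lem.Mult}). Second, a $k$ with $2\pi k\vartheta$ in an arc of length $\asymp Q^{-2}$ positioned relative to $\arg\sigma(\gamma)$. Aligning $\sigma(\varepsilon)^n$ with an axis only controls the single direction $\bv_n$.

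Finally, the rate does not come from $n\asymp\log|\btau|$ plus equidistribution. Equidistribution says nothing about when $k\vartheta$ first enters an arc of length $\asymp Q^{-2}$. You need $\|q\vartheta\|_{\R/\Z}\gg q^{-C}$, from Baker--W\"ustholz (Lemma \ref{lem.Baker}), to find such a $k\ll Q^{2(C+1)}$. That is where $\kappa$ is lost, not in a pigeonhole step.
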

The constants $c$ and $\kappa$ in Theorem \ref{thm.Complex} are effectively computable, but $\kappa$ comes from a lower bound for linear forms in logarithms, due to Baker and W\"{u}stholz, and is very small. We do not claim that the exponent is near optimal, and in fact numerical experiments with $\balph=(2^{1/3},4^{1/3})$ suggest that along the sequence of $\btau$ which our proof produces the number of gaps grows like a power of $\log|\btau|$ with exponent close to $1/2$.

The proof of Theorem \ref{thm.TotReal} uses the correspondence from \cite{HaynMark2020}, building on work from \cite{MarkStro2017}, which identifies the gaps in $S(\balph,\btau)$ with values of a certain function on the space of unimodular lattices in $\R^3$. This is combined with a theorem of Shapira \cite{Shap2011}, which says that the orbit of the relevant lattice under the full diagonal group $A$ is dense in that space. The gaps problem only sees the dilations with $\tau_1,\tau_2\ge 1$, which form a sub-semigroup of $A$, and density of an $A$-orbit does not in general give density of the orbit of a semigroup. For the lattices in question, however, this follows from an elementary observation about their dual lattices, and the proof of Theorem \ref{thm.TotReal} then becomes quite short.

For Theorem \ref{thm.Complex} we proceed instead by a direct construction. The unit group of the field has rank one, and if $u\in(0,1)$ is a unit and $\sigma$ is a complex embedding of the field then the powers $\sigma(u)^k$ spiral outward, with $\arg\sigma(u)$ an irrational multiple of $\pi$. Reading the coefficients of $u^k$ with respect to the basis $1,\alpha_1,\alpha_2$ from its complex embedding, we are able to find elements of $\Z+\Z\alpha_1+\Z\alpha_2$ whose coefficient vectors are of prescribed sign and size, and from two such elements we build a `ladder' of gaps of distinct lengths in $S(\balph,\btau)$. The same mechanism was used in \cite{DhanHayn2025} to study accumulation points of normalized approximations. The rate in Theorem \ref{thm.Complex} then reduces to the question of how long the sequence $k\arg\sigma(u)$ takes to enter a prescribed arc, and this is where Baker's theorem enters.

This paper is organized as follows. In Section \ref{sec.Lattices} we recall the correspondence between gaps and lattices from \cite{HaynMark2020}, in the coordinates we will use, together with the one result from that paper which we need. In Section \ref{sec.TotReal} we prove Theorem \ref{thm.TotReal}. Sections \ref{sec.Prelim}--\ref{sec.Complex} are devoted to the proof of Theorem \ref{thm.Complex}: Section \ref{sec.Prelim} collects the facts we need about cubic fields with a complex embedding, Section \ref{sec.Ladder} contains the combinatorial lemma which produces many gaps from two well chosen elements of the field, and Section \ref{sec.Complex} completes the proof.

\noindent\textbf{Acknowledgments:} The author would like to thank Jens Marklof for helpful conversations, and for introducing him to the lattice perspective for studying gaps problems. AI was used in this research, and to assist in the preparation of this manuscript. However, the final details of all proofs, as well as every statement, citation, and claim in the paper, has been checked by the author.

\noindent\textbf{Notation:} We use $O$ and $\ll$ to denote the standard big-oh and Vinogradov notations, with implied constants depending only on $\balph$ unless otherwise indicated. For $x\in\R$ we write $\|x\|_{\R/\Z}$ for the distance from $x$ to the nearest integer, and $\langle x\rangle$ for the unique element of $(x+\Z)\cap(0,1]$. Vectors in $\R^2$ and $\R^3$ are columns, written in boldface; matrices act on the left. For $\bx\in\R^2$ we write $\bx\cdot\balph=x_1\alpha_1+x_2\alpha_2$. Coordinates on $\R^3$ are written $\bx=(x_0,x_1,x_2)$. Finally, $\log$ denotes the natural logarithm.

\section{The gap counting function on the space of lattices}\label{sec.Lattices}

Let $X=\SL(3,\R)/\SL(3,\Z)$. We identify a point $\cL\in X$ with the unimodular lattice $\cL\Z^3\subset\R^3$ spanned by the columns of any representative of $\cL$, and for $\bt=(t_1,t_2)\in[0,1)^2$ we define
\begin{equation}\label{eqn.FDef}
F(\cL,\bt)=\min\bigl\{x_0>0:\bx\in\cL\Z^3,\ -t_i\le x_i<1-t_i\ (i=1,2)\bigr\},
\end{equation}
with the convention that $F(\cL,\bt)=\infty$ if no such $\bx$ exists. For $\balph\in\R^2$ and for $\btau$ with $\tau_1,\tau_2\ge 1$ we write
\[
u_{\balph}=\begin{pmatrix}1&0&0\\ \alpha_1&1&0\\ \alpha_2&0&1\end{pmatrix},\qquad
\hat g_{\btau}=\diag\bigl(\tau_1\tau_2,\tau_1^{-1},\tau_2^{-1}\bigr),\qquad
z_{\balph}=u_{\balph}^{\,t}\,\SL(3,\Z)\in X,
\]
and $W_{\btau}=\{\bmm\in\Z^2:0\le m_1<\tau_1,\ 0\le m_2<\tau_2\}$, so that $S(\balph,\btau)=\{\bmm\cdot\balph\bmod 1:\bmm\in W_{\btau}\}$. For $\bmm_0\in W_{\btau}$ we write $s(\bmm_0)$ for the length of the interval of $(\R/\Z)\setminus S(\balph,\btau)$ whose left endpoint is $\bmm_0\cdot\balph$, and, with $g_{\btau}=\diag(\tau_1,\tau_2)$, we write $g_{\btau}^{-1}\bmm_0=(m_{0,1}/\tau_1,m_{0,2}/\tau_2)\in[0,1)^2$. We will use two results from \cite{HaynMark2020}.
\begin{lemma}[{\cite[Section 2]{HaynMark2020}}]\label{lem.Dict}
Let $\balph\in\R^2$ and let $\tau_1,\tau_2\ge 1$. Then for every $\bmm_0\in W_{\btau}$,
\begin{equation}\label{eqn.Dict}
F\bigl(\hat g_{\btau} z_{\balph},\,g_{\btau}^{-1}\bmm_0\bigr)=\tau_1\tau_2\,s(\bmm_0).
\end{equation}
Consequently $G(\balph,\btau)$ is the number of distinct values taken by the left hand side of \eqref{eqn.Dict}, as $\bmm_0$ runs over $W_{\btau}$.
\end{lemma}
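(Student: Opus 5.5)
The plan is to unwind both sides of \eqref{eqn.Dict} explicitly. Both will turn out to be $\tau_1\tau_2$ times a minimum of $\langle (\bmm-\bmm_0)\cdot\balph\rangle$ over the same finite set of $\bmm$.

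\textbf{Step 1: the lattice.} Since $u_{\balph}^{\,t}$ has rows $(1,\alpha_1,\alpha_2)$, $(0,1,0)$, $(0,0,1)$, the lattice $z_{\balph}$ is
\[
u_{\balph}^{\,t}\Z^3=\{(n+\bk\cdot\balph,\,k_1,\,k_2):n\in\Z,\ \bk\in\Z^2\}.
\]
Applying $\hat g_{\btau}$, the lattice $\hat g_{\btau}z_{\balph}$ consists of the points
\[
\bigl(\tau_1\tau_2(n+\bk\cdot\balph),\,k_1/\tau_1,\,k_2/\tau_2\bigr).
\]

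\textbf{Step 2: the window condition.} Put $t_i=m_{0,i}/\tau_i$. The condition $-t_i\le x_i<1-t_i$ becomes $-m_{0,i}\le k_i<\tau_i-m_{0,i}$. Writing $\bmm=\bk+\bmm_0$, this says exactly that $\bmm\in W_{\btau}$.

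\textbf{Step 3: optimise over $n$.} For fixed $\bk$, the smallest positive value of $n+\bk\cdot\balph$ over $n\in\Z$ is $\langle\bk\cdot\balph\rangle\in(0,1]$. Therefore
\[
F\bigl(\hat g_{\btau}z_{\balph},g_{\btau}^{-1}\bmm_0\bigr)=\tau_1\tau_2\min_{\bmm\in W_{\btau}}\bigl\langle(\bmm-\bmm_0)\cdot\balph\bigr\rangle .
\]
This minimum is finite, since $W_{\btau}$ is finite and nonempty.

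\textbf{Step 4: identify with the gap.} Moving from the point $\bmm_0\cdot\balph$ in the positive direction on $\R/\Z$, the distance to the point $\bmm\cdot\balph$ is $\langle(\bmm-\bmm_0)\cdot\balph\rangle$. Points coinciding with $\bmm_0\cdot\balph$ (including $\bmm=\bmm_0$ itself) contribute the value $1$, which corresponds to a full turn of the circle.

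So the minimum above is the distance from $\bmm_0\cdot\balph$ to the next point of $S(\balph,\btau)$ to its right. This is precisely $s(\bmm_0)$. In the degenerate case where $S(\balph,\btau)$ is a single point, the minimum is $1$, which is again the correct gap length. This proves \eqref{eqn.Dict}.

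\textbf{Consequence.} Every interval of $(\R/\Z)\setminus S(\balph,\btau)$ has a left endpoint of the form $\bmm_0\cdot\balph$ with $\bmm_0\in W_{\btau}$. Conversely, each such $\bmm_0$ determines a gap. Hence the set of gap lengths equals $\{s(\bmm_0):\bmm_0\in W_{\btau}\}$. Multiplying by the constant $\tau_1\tau_2$ does not change the number of distinct values, so $G(\balph,\btau)$ equals the number of distinct values of the left-hand side of \eqref{eqn.Dict}.

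\textbf{Main obstacle.} The only delicate point is bookkeeping of conventions. Choosing $\langle\cdot\rangle$ with values in $(0,1]$, rather than $[0,1)$, is what makes $x_0>0$ match ``next point strictly to the right''. The half-open inequalities in \eqref{eqn.FDef} must also be checked to match the half-open ranges defining $W_{\btau}$.
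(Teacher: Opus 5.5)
Your proposal is correct and takes essentially the same route as the source the paper relies on: the paper gives no independent proof and defers to the identity $s_{k,B}=(\det B)^{-1}F(A_B,kB^{-1})$ of \cite[Section 2]{HaynMark2020}. That identity comes from exactly your unwinding of the lattice $\hat g_{\btau}z_{\balph}$ and the window condition, and your intermediate formula $s(\bmm_0)=\min_{\bmm\in W_{\btau}}\langle(\bmm-\bmm_0)\cdot\balph\rangle$ is the one the paper itself invokes in Section \ref{sec.Ladder}; your use of the convention $\langle x\rangle\in(0,1]$ to handle coincident points and the single-point case is also sound.
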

Lemma \ref{lem.Dict} is the identity $s_{k,B}=(\det B)^{-1}F(A_B,kB^{-1})$ of \cite[eq. (2.8)]{HaynMark2020}, in the special case $\mc D=[0,1)^2$, $B=g_{\btau}$ and $k=\bmm_0$, and we refer to \cite[Section 2]{HaynMark2020} for its proof. The only differences are in notation. In \cite{HaynMark2020} lattices are written as $\Z^3M$, with $M\in\SL(3,\R)$ acting on row vectors, the distinguished coordinate is the last one, and $F$ is a function on $\SL(3,\Z)\backslash\SL(3,\R)$. We also use Proposition 5 of \cite{HaynMark2020}, which gives the following result.
\begin{proposition}[{\cite[Proposition 5]{HaynMark2020}}]\label{prop.OpenSet}
There are constants $\lambda,\epsilon_0>0$ with the following property. For every $\epsilon\in(0,\epsilon_0]$ there are a non-empty open set $\cU\subseteq X$ and non-empty open sets $\cV_j\subseteq(0,1)^2$, $1\le j\le\lfloor\lambda/\epsilon\rfloor$, such that
\[
F(\cL,\bt)=\epsilon\,(j+a_j(\cL))\qquad\text{for all }\cL\in\cU\text{ and }\bt\in\cV_j,
\]
with $a_j(\cL)\in[-\tfrac{1}{100},\tfrac{1}{100}]$.
\end{proposition}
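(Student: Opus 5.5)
The plan is to construct one explicit lattice $\cL_0\in X$ at which $F(\cL_0,\cdot)$ takes the exact values $\epsilon j$ on open sets of $\bt$, and then to pass to a neighbourhood $\cU$ of $\cL_0$ by continuity. The point is that on the relevant region $F$ is a minimum over a \emph{finite} set of lattice points, each subject to strict, open conditions.

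\textbf{Step 1: the lattice.} I will take $\cL_0$ generated by three vectors
\[
\bv_1=(\epsilon,a,b),\qquad \bv_2=(0,1,c),\qquad \bv_3=(0,0,\epsilon^{-1}),
\]
which span a unimodular lattice. Its points are $k\bv_1+m\bv_2+l\bv_3$, and these have first coordinate $x_0=k\epsilon$. So $F(\cL_0,\bt)=\epsilon k^*$, where $k^*\ge 1$ is the least $k$ for which some $m,l$ place $(x_1,x_2)$ in the half-open box $B_{\bt}=[-t_1,1-t_1)\times[-t_2,1-t_2)$. The condition on $x_1$ determines $m$ uniquely, namely $m=-\lfloor ka+t_1\rfloor$. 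The condition on $x_2$ then asks that
\[
kb-c\lfloor ka+t_1\rfloor \in [-t_2,1-t_2) \pmod{\epsilon^{-1}} .
\]
Since $\epsilon^{-1}>1$, at most one $l$ is relevant.

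\textbf{Step 2: the parameters.} I will choose $a,b,c$, depending on $\epsilon$, so that for $k\le \lambda/\epsilon$ the points
\[
p_k=\bigl(\{ka\},\ kb-c\lfloor ka+t_1\rfloor \bmod \epsilon^{-1}\bigr)
\]
run through distinct small cells of the unit square. Concretely, $a$ is small and $b$ drifts the second coordinate, with $c$ correcting each wrap of $ka$. The requirement is that the pairwise separations exceed some fixed $\rho>0$ and that each $p_k$ leaves room around it. For each $j\le\lambda/\epsilon$ I then take $\cV_j$ to be a small open set of $\bt\in(0,1)^2$ for which $B_{\bt}$ contains $p_j$ in its interior and stays a definite distance from $p_1,\dots,p_{j-1}$. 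This gives $F(\cL_0,\bt)=\epsilon j$ exactly on $\cV_j$.

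\textbf{Step 3: perturbation.} Replace $\cL_0$ by $\cL=h\cL_0$ with $h$ near the identity. Lattice points with $x_0\le \lambda+1$ and $(x_1,x_2)$ in a bounded set are finitely many, and they move continuously with $h$. Hence there is a neighbourhood $\cU$ of $\cL_0$ on which, uniformly for $\bt\in\cV_j$, the following hold. The unique admissible point of smallest positive height is still the perturbation of $j\bv_1+m\bv_2+l\bv_3$, because all the box conditions were strict by a margin. No new point of height below about $\epsilon j$ enters the box, because the heights $k\epsilon+O(\|h-I\|)$ stay separated for $k<j$. Taking $\|h-I\|$ small compared with $\epsilon/(100\lambda)$ makes the height of that point equal to $\epsilon(j+a_j(\cL))$ with $|a_j|\le 1/100$. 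There is one subtlety here: $a_j$ depends only on $\cL$ and not on $\bt$, because the minimizing point is the same for all $\bt\in\cV_j$.

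\textbf{Main obstacle.} The real work is Step 2. The box has side exactly $1$, so the $x_1$-condition alone is always satisfiable, and all the separation must come from the coupled $x_2$-coordinate. One has to arrange that the $\asymp\epsilon^{-1}$ points $p_k$ are pairwise separated by a margin \emph{uniform in $\epsilon$}, while each can be isolated by a unit box with $\bt\in(0,1)^2$. These two requirements are what dictate the constants $\lambda$ and $\epsilon_0$. My first attempt is $a=\epsilon$ (so $\lfloor ka+t_1\rfloor$ is constant on long stretches), with $b$ a fixed multiple of $\epsilon$ chosen so that $p_k$ traces a slowly drifting staircase. If that fails near the wraps, I would enlarge the finite configuration, using a second short vector to stagger the rows.
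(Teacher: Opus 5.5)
\textbf{Step 2 is missing, and the target you set for it is unreachable.}

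Your framework is fine. An explicit $\cL_0$ with $F(\cL_0,\cdot)\equiv\epsilon j$ on open sets $\cV_j$, followed by an $\epsilon$-dependent neighbourhood $\cU$, does prove the statement, and your three-vector family is rich enough for this. The paper quotes the result from Haynes--Marklof without proof, so your argument has to stand on its own.

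The gap is Step 2. You rightly call it the real work, but you do not carry it out: no $c$ is chosen, nothing is checked, and the fallback is only sketched. Moreover, the criterion you set for Step 2 cannot be met. For each $j\le\lambda/\epsilon$, some lattice point of height $j\epsilon$ must have $(x_1,x_2)\in B_{\bt}\subseteq[-1,1)^2$. So $\asymp\epsilon^{-1}$ such projections lie in a set of area $4$, and two of them are within $O(\epsilon^{1/2})$ of each other. A separation uniform in $\epsilon$ is therefore impossible. Your $p_k$ are not in the unit square either: their second coordinate is taken mod $\epsilon^{-1}$, and they depend on $t_1$.

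What you are missing is that no uniformity is needed. $\cU$ and the $\cV_j$ may depend on $\epsilon$, so margins of size $\asymp\epsilon$ suffice. Your own choice $\|h-I\|\ll\epsilon/(100\lambda)$ already allows for this.

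\textbf{How to complete it.} Once $c$ is fixed, your first attempt works. Take $a=\epsilon$, $b=\epsilon/4$, $c=1/2$, $\lambda=\epsilon_0=1/2$, and for $1\le j\le\lfloor 1/(2\epsilon)\rfloor$ set
\[
\cV_j=\bigl(1-j\epsilon+\tfrac{\epsilon}{4},\,1-(j-1)\epsilon-\tfrac{\epsilon}{4}\bigr)\times\bigl(1-\tfrac{\epsilon}{8},\,1-\tfrac{\epsilon}{16}\bigr).
\]
For $1\le k<j$ you have $k\epsilon+t_1\in(\tfrac12,1-\tfrac{\epsilon}{4})$, which forces $m=0$. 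Then $k\epsilon/4+t_2\in(1+\tfrac{\epsilon}{8},\tfrac98)$, so no $l$ puts $x_2$ in $[-t_2,1-t_2)$.

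For $k=j$ you have $j\epsilon+t_1\in(1+\tfrac{\epsilon}{4},1+\tfrac{3\epsilon}{4})$, which forces $m=-1$. Then $l=0$ gives $x_2+t_2=t_2+j\epsilon/4-\tfrac12\in(\tfrac12,\tfrac58)$.

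Hence $F(\cL_0,\bt)=j\epsilon$ on $\cV_j$. The minimizer $j\bv_1-\bv_2$ is the same for every $\bt\in\cV_j$, and every inclusion or exclusion holds with margin at least $\epsilon/8$.

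Step 3 then goes through with $\|h-I\|$ a small multiple of $\epsilon$, after two repairs:
\begin{enumerate}
\item When you invoke finiteness, bound $x_0$ from below as well as above. The set of points with $x_0\le\lambda+1$ in a bounded region is infinite.
\item Check the nonzero points of height $0$, namely $(0,m,\tfrac m2+\tfrac l\epsilon)$. Since $t_1\in(\tfrac12,1-\tfrac{\epsilon}{4})$ on $\cV_j$, these stay at distance at least $\epsilon/4$ from $B_{\bt}$. So they cannot acquire a small positive height inside the box after perturbation, while the origin is fixed by $h$.
\end{enumerate}
Finally, what keeps layers $1,\dots,j-1$ out of the box is this spatial margin, not the separation of heights that you cite.
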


\section{Totally real cubic fields: Proof of Theorem \ref{thm.TotReal}}\label{sec.TotReal}

Let $A$ denote the group of diagonal matrices in $\SL(3,\R)$ with positive entries, acting on $X$ by left multiplication. The map $\btau\mapsto\hat g_{\btau}$, extended to all $\btau\in\R_{>0}^2$ by the same formula, is an isomorphism from $(\R_{>0}^2,\times)$ onto $A$, and the dilations which occur in the gaps problem correspond to the sub-semigroup $\{\hat g_{\btau}:\tau_1,\tau_2\ge 1\}$. The input we need from the dynamics of $A$ is the following theorem of Shapira, which he proved as part of his resolution of a problem of Cassels on the inhomogeneous minima of products of linear forms.

\begin{theorem*}[{\cite[Corollary 5.2]{Shap2011}}]
Let $1,\alpha_1,\alpha_2$ be a $\Q$-basis for a totally real cubic field $K\subset\R$. Then $\overline{Az_{\balph}}=X$.
\end{theorem*}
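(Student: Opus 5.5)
This is a deep result which the paper only cites, so what follows is my own reconstruction of a route to it, not the argument of \cite{Shap2011}. The idea is to relate $z_{\balph}$ to the lattice with a compact $A$-orbit coming from $K$, and then use a rigidity theorem for $A$-invariant closed sets. Let $\sigma_1=\mathrm{id},\sigma_2,\sigma_3$ be the real embeddings of $K$ and let $M=\Z+\Z\alpha_1+\Z\alpha_2$. Let $x_M\in X$ be the unimodular rescaling of the Minkowski lattice $\{(\sigma_1\beta,\sigma_2\beta,\sigma_3\beta):\beta\in M\}$.

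The lattice $z_{\balph}\Z^3$ consists of the vectors $(\beta,m_1,m_2)$ with $\beta=m_0+m_1\alpha_1+m_2\alpha_2$. For $i=2,3$ I would write
\[
\sigma_i\beta=\beta+m_1(\sigma_i\alpha_1-\alpha_1)+m_2(\sigma_i\alpha_2-\alpha_2).
\]
This exhibits $x_M=h\,z_{\balph}$ for an explicit $h\in\SL(3,\R)$. Here $h$ fixes the first coordinate up to scaling, and the entries of its last two columns form an invertible $2\times2$ block. Since $1,\alpha_1,\alpha_2$ is a basis, the vectors $(\sigma_i\alpha_1-\alpha_1,\sigma_i\alpha_2-\alpha_2)$, $i=2,3$, are linearly independent. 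Hence $h$ lies in the parabolic subgroup $P$ stabilising the line $\R e_0$.

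Next I would use units. The unit group of $\mathcal O_K$ has rank two and a finite-index subgroup stabilises $M$. Each such unit $\varepsilon$ gives $a_\varepsilon=\diag(\sigma_1\varepsilon,\sigma_2\varepsilon,\sigma_3\varepsilon)\in A$ (after sign adjustment) fixing $x_M$, and the resulting orbit $Ax_M$ is compact. Then
\[
Az_{\balph}=Ah^{-1}x_M=A\,(a_\varepsilon^{\,n}h^{-1}a_\varepsilon^{-n})\,x_M .
\]
Write $h^{-1}=\ell\,p$ with $p$ in the lower unipotent part of $P$ and $\ell$ in its Levi factor. Choosing $\varepsilon$ with $|\sigma_1\varepsilon|$ large and $|\sigma_2\varepsilon|,|\sigma_3\varepsilon|$ small, the conjugates $a_\varepsilon^{\,n}pa_\varepsilon^{-n}$ tend to the identity. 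Modulo the Levi part, this shows that $\overline{Az_{\balph}}$ contains points of the form $A\,\ell_\infty u\,x_M$ with $u$ a non-trivial unipotent. Varying the unit (using units with other sign patterns on $\log|\sigma_i\varepsilon|$) and passing to limits, I expect that $\overline{Az_{\balph}}$ contains the compact orbit $Ax_M$ together with $A$-orbits of points $ux_M$ for $u$ in a one-parameter unipotent subgroup not normalised by $A$.

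The final step is to show that a closed $A$-invariant set of this shape is all of $X$. The model is the Lindenstrauss--Weiss theorem, or Shapira's own topological argument: if $Ax_M$ is compact and the closure contains $A u x_M$ for such $u$, then it contains the orbit of a one-parameter unipotent group through a point of $Ax_M$. By Ratner's orbit closure theorem, together with the fact that $K$ is a cubic field, so that there is no intermediate closed orbit, the closure is then $X$.

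I expect this last step to be the main obstacle. Producing genuinely unipotent (rather than merely parabolic) directions in the orbit closure, and excluding intermediate homogeneous subspaces, is the substance of \cite{Shap2011}. I would first try to reduce to the rank-two compact-orbit rigidity statement of Lindenstrauss--Weiss, rather than reprove it.
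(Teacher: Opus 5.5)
The paper gives no proof of this statement; it is quoted from \cite{Shap2011}, so your sketch has to stand on its own. It does not. You locate the main obstacle in the final rigidity step, but that step is standard once it has its input. The argument actually breaks earlier, at ``modulo the Levi part'', which discards exactly the part of $h$ that carries the difficulty.

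Explicitly, $h=c\begin{pmatrix}1&0\\ \mathbf{1}&D\end{pmatrix}$ with $\mathbf{1}=(1,1)^t$, $D=\bigl(\sigma_i(\alpha_j)-\alpha_j\bigr)_{i=2,3;\,j=1,2}$ and $c^3\det D=1$. In particular $he_0=c(1,1,1)^t$, so $h$ preserves the plane $x_0=0$, not the line $\R e_0$. Then $h^{-1}=c^{-1}\diag(1,D^{-1})\,p$ with $p=\begin{pmatrix}1&0\\ -\mathbf{1}&I\end{pmatrix}$.

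Write $D^{-1}=(d_{ij})$. All four $d_{ij}$ are non-zero; for example $d_{12}=-(\sigma_2\alpha_2-\alpha_2)/\det D$ and $d_{11}=(\sigma_3\alpha_2-\alpha_2)/\det D$. For $a=\diag(\lambda_0,\lambda_1,\lambda_2)$ the $2\times 2$ block of $ah^{-1}a^{-1}$ is $c^{-1}\begin{pmatrix}d_{11}&\rho d_{12}\\ \rho^{-1}d_{21}&d_{22}\end{pmatrix}$, where $\rho=\lambda_1/\lambda_2$. For $a=a_\varepsilon^n$ we get $\rho=|\sigma_2\varepsilon/\sigma_3\varepsilon|^n$. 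This tends to $0$ or $\infty$ for every unit $\varepsilon\neq\pm1$, since $|\sigma_2\varepsilon|=|\sigma_3\varepsilon|$ would force $\varepsilon^2\in\Q$.

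So $a_\varepsilon^nh^{-1}a_\varepsilon^{-n}$ has no limit, there is no $\ell_\infty$, and the Levi factor cannot be absorbed into $A$ because $D^{-1}$ has no zero entries. Consequently neither $Ax_M\subseteq\overline{Az_{\balph}}$ nor $Aux_M\subseteq\overline{Az_{\balph}}$ (for a fixed non-trivial unipotent $u$) is established. Those are exactly the inputs your last step needs. The only limits with $ah^{-1}a^{-1}$ bounded (namely $\rho$ bounded) are points $\diag(1,E)\,y$ with $y\in Ax_M$ and $E$ a diagonal conjugate of $c^{-1}D^{-1}$, and these do not lie on $Ax_M$. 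Note also that a unipotent group ``not normalised by $A$'' cannot drive the root-group mechanism, which relies on $aua^{-1}$ staying in the same group.

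A repair is to keep the Levi part and let it supply the unipotent direction. For $\rho$ large,
\[
\begin{pmatrix}d_{11}&\rho d_{12}\\ \rho^{-1}d_{21}&d_{22}\end{pmatrix}=\begin{pmatrix}1&0\\ \frac{d_{21}}{\rho d_{11}}&1\end{pmatrix}\begin{pmatrix}d_{11}&0\\ 0&\frac{\det D^{-1}}{d_{11}}\end{pmatrix}\begin{pmatrix}1&\frac{\rho d_{12}}{d_{11}}\\ 0&1\end{pmatrix}.
\]
Hence $az_{\balph}=g_a\,d_\infty\,u(t_a)\,ax_M$, where:
\begin{itemize}
\item $u(t)$ is the root group in the $(x_1,x_2)$-block and $t_a=\rho d_{12}/d_{11}$;
\item $d_\infty$ is a fixed diagonal matrix (possibly with signs);
\item $g_a\to e$ as $\lambda_1/\lambda_0\to 0$ and $\rho\to\infty$.
\end{itemize}
The functional $\log(\lambda_1/\lambda_2)$ is injective on the rank-two lattice $\log\mathrm{Stab}_A(x_M)$, so its image is dense. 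You can therefore choose $a\in\mathrm{Stab}_A(x_M)$, so that $ax_M=x_M$, with $\lambda_0/\lambda_1$ arbitrarily large and $t_a$ arbitrarily close to any prescribed large $t$ of the sign of $d_{12}/d_{11}$. Thus $\overline{Az_{\balph}}$ contains $d_\infty$ times the set of limit points of a one-sided orbit $\{u(t)x_M\}$.

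By Ratner's equidistribution theorem that limit set equals $\overline{\{u(t)x_M:t\in\R\}}$. This closure is $X$: a proper intermediate closed orbit would force a non-zero vector of $x_M$ in $\{x_2=0\}$, or of $x_M^*$ in $\{y_1=0\}$. Neither exists for Minkowski lattices of a cubic field, since a non-zero field element has no zero conjugate. So $\overline{Az_{\balph}}=d_\infty X=X$. This route bypasses Lindenstrauss--Weiss entirely and never needs $Ax_M\subseteq\overline{Az_{\balph}}$ as a separate step, but it uses the full strength of Ratner's theorem. Whether it matches Shapira's argument cannot be checked against the paper, which only cites the result.
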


Density of the $A$-orbit of a lattice does not by itself give density of its orbit under the semigroup $\{\hat g_{\btau}:\tau_1,\tau_2\ge 1\}$, which is what the gaps problem sees. For the lattices $z_{\balph}$, however, the passage from the group to the semigroup is elementary, as we will now show. For $\cL\in X$ we write $\cL^*$ for the Euclidean dual lattice of $\cL\Z^3$, that is, the set of $\by\in\R^3$ with $\by\cdot\bx\in\Z$ for all $\bx\in\cL\Z^3$, so that $(M\Z^3)^*=M^{-t}\Z^3$ for $M\in\SL(3,\R)$.
We also write $\be_1=(0,1,0)^t$ and $\be_2=(0,0,1)^t$.

\begin{lemma}\label{lem.Tails}
Let $\balph\in\R^2$ be such that $\overline{Az_{\balph}}=X$. Then for every $T>0$ the set $\{\hat g_{\btau} z_{\balph}:\tau_1,\tau_2>T\}$ is dense in $X$.
\end{lemma}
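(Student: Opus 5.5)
The plan is to use the observation about dual lattices hinted at just before the statement. Since $z_{\balph}=u_{\balph}^{\,t}\SL(3,\Z)$, its dual lattice is $u_{\balph}^{-1}\Z^3$. The last two columns of $u_{\balph}^{-1}$ are $\be_1$ and $\be_2$, so $\be_1,\be_2\in z_{\balph}^*$. For a diagonal $a\in A$ we have $(a\cL)^*=a^{-1}\cL^*$. Hence
\[
(\hat g_{\btau}z_{\balph})^*\ni \hat g_{\btau}^{-1}\be_i=\tau_i\be_i\qquad(i=1,2),
\]
and this holds for every $\btau\in\R_{>0}^2$. So whenever $\hat g_{\btau}z_{\balph}$ lies in some region, the dual lattice contains the two vectors $\tau_1\be_1$ and $\tau_2\be_2$, both in the plane $\{y_0=0\}$. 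The aim is to choose target regions whose dual lattices have no short nonzero vectors in that plane. This will force $\tau_1,\tau_2$ to be large.

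Fix $T>0$. Let $\Omega_T\subseteq X$ be the set of $\cL$ such that $\cL^*$ has no nonzero vector $\by$ with $y_0=0$ and $|\by|\le T$. First I show that $\Omega_T$ is open. Given $\cL\in\Omega_T$, the set $\cL^*\cap B(0,T+1)$ is finite, and each of its nonzero elements satisfies $|y_0|>0$ or $|\by|>T$. For $\cL'$ in a small neighbourhood of $\cL$, write $\cL'=h\cL$ with $h$ close to the identity. Then $\cL'^*=h^{-t}\cL^*$, and the vectors of $\cL'^*$ in $B(0,T+1/2)$ are images of vectors of $\cL^*$ in $B(0,T+1)$ under a map close to the identity. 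Shrinking the neighbourhood so that these images keep $y_0\neq0$ or norm $>T$, we stay in $\Omega_T$.

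Next, $\Omega_T$ is dense. Indeed, the set of $\cL$ whose dual lattice meets the plane $\{y_0=0\}$ only in $0$ is dense. The condition fails only for countably many rational relations among the entries of the first row of $\cL^{-1}$, and a generic small perturbation of any $\cL$ removes them; for example, apply $h$ close to the identity chosen so that the first row of $(h\cL)^{-1}$ has $\Q$-linearly independent entries.

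Now let $\cU\subseteq X$ be any non-empty open set. Then $\cU\cap\Omega_T$ is non-empty and open, so by the hypothesis $\overline{Az_{\balph}}=X$ there is some $\hat g_{\btau}z_{\balph}\in\cU\cap\Omega_T$, with $\btau\in\R^2_{>0}$ arbitrary a priori. Its dual lattice contains the nonzero vectors $\tau_i\be_i$, which lie in $\{y_0=0\}$. Membership in $\Omega_T$ therefore forces $\tau_i=|\tau_i\be_i|>T$ for $i=1,2$. Thus $\{\hat g_{\btau}z_{\balph}:\tau_1,\tau_2>T\}$ meets every non-empty open set, which is the claim.

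The only step that needs care is the openness of $\Omega_T$. Because it involves the non-open condition $y_0=0$, it has to be phrased via the finiteness of $\cL^*\cap B(0,T+1)$ and continuity of $\cL\mapsto\cL^*$, as above, rather than via a naive semicontinuity argument. I expect this to be routine once set up this way. The density of $\Omega_T$ is a straightforward genericity statement.
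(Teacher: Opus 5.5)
Your proof is correct and is essentially the paper's argument: both use $\tau_1\be_1,\tau_2\be_2\in(\hat g_{\btau}z_{\balph})^*$ and then remove a closed set with empty interior. The paper removes the smaller set $E_T$ of lattices whose dual contains some $s\be_i$ with $0<s\le T$, proving closedness by a sequential argument and empty interior by rotating with $\mathrm{SO}(3)$; you remove all lattices with a nonzero dual vector of norm at most $T$ in the plane $y_0=0$, proving openness of the complement by local finiteness and density by genericity. One small slip: for $\cL=M\SL(3,\Z)$ the first coordinate of $M^{-t}\bv$ is $\bv\cdot M^{-1}(1,0,0)^t$, so the vector whose entries must be $\Q$-linearly independent is the first \emph{column} of $M^{-1}$, not its first row, but your perturbation argument still works since $M^{-1}h^{-1}(1,0,0)^t$ sweeps out an open set as $h$ varies near the identity.
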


\begin{proof}
The matrix $(u_{\balph}^{\,t})^{-t}=u_{\balph}^{-1}$ has second and third columns $\be_1$ and $\be_2$, so for every $\btau\in\R_{>0}^2$ we have that
\begin{equation}\label{eqn.DualVecs}
\tau_1\be_1,\ \tau_2\be_2\in\hat g_{\btau}^{-1}u_{\balph}^{-1}\Z^3=(\hat g_{\btau} z_{\balph})^* .
\end{equation}
Fix $T>0$ and let
\[
E_T=\bigl\{\cL\in X:s\be_i\in\cL^*\text{ for some }i\in\{1,2\}\text{ and some }0<s\le T\bigr\}.
\]
By \eqref{eqn.DualVecs}, $\hat g_{\btau} z_{\balph}\in E_T$ whenever $\min\{\tau_1,\tau_2\}\le T$. We claim that $E_T$ is closed and has empty interior. Granting this, let $\cU\subseteq X$ be non-empty and open. Then $\cU\setminus E_T$ is non-empty and open, so by hypothesis it contains a point $\hat g_{\btau} z_{\balph}$, and this point has $\tau_1,\tau_2>T$.

To see that $E_T$ is closed, let $\cL_n\in E_T$ converge to $\cL\in X$, with $s_n\be_i\in\cL_n^*$ and $0<s_n\le T$. Passing to a subsequence, we may assume that $i$ is the same for every $n$ and that $s_n\rar s\in[0,T]$. Choose representatives $M_n\rar M$ in $\SL(3,\R)$ of $\cL_n$ and $\cL$, and write $s_n\be_i=M_n^{-t}\bv_n$ with $\bv_n\in\Z^3\setminus\{\bm 0\}$. The vectors $\bv_n=s_nM_n^{\,t}\be_i$ are bounded, so after passing to a further subsequence $\bv_n=\bv$ is constant, and then $s_n\be_i=M_n^{-t}\bv\rar M^{-t}\bv$. Since $\bv\not=\bm 0$ we have that $M^{-t}\bv\not=\bm 0$, so $0<s\le T$ and $s\be_i=M^{-t}\bv\in\cL^*$. Hence $\cL\in E_T$.

To see that $E_T$ has empty interior, let $\cL\in X$ and let $k\in\mathrm{SO}(3)$. Then $(k\cL)^*=k\cL^*$, so $k\cL\in E_T$ only if $k\bv\in\R\be_1\cup\R\be_2$ for some $\bv\in\cL^*\setminus\{\bm 0\}$. For each such $\bv$ the set of $k\in\mathrm{SO}(3)$ with $k\bv\in\R\be_1\cup\R\be_2$ is a union of four cosets of $\mathrm{Stab}(\bv)\cong \mathrm{SO}(2)$, and so it has Haar measure zero. Since $\cL^*$ is countable, the set of $k$ with $k\cL\in E_T$ has measure zero, and in particular there are $k$ arbitrarily close to the identity with $k\cL\not\in E_T$. Since $k\cL\rar\cL$ as $k\rar e$, this shows that $\cL$ is not an interior point of $E_T$.
\end{proof}

\noindent\emph{Proof of Theorem \ref{thm.TotReal}:} Let $\lambda,\epsilon_0$ be as in Proposition \ref{prop.OpenSet}, let $\epsilon\in(0,\epsilon_0]$, and let $\cU$ and $\cV_j$, $1\le j\le\lfloor\lambda/\epsilon\rfloor$, be the open sets guaranteed by that proposition. Choose $\delta\in(0,1]$ such that each $\cV_j$ contains an open square of side length $\delta$, and put $T=2/\delta$. By Lemma \ref{lem.Tails}, together with the theorem preceding it, the set $\{\hat g_{\btau} z_{\balph}:\tau_1,\tau_2>T\}$ is dense in $X$, so we may choose $\btau$ with $\tau_1,\tau_2>T$ and $\hat g_{\btau} z_{\balph}\in\cU$. For each $j$ the set $\{(\tau_1t_1,\tau_2t_2):\bt\in\cV_j\}$ contains an open rectangle with sides of length greater than $2$, and therefore a point $\bmm_j\in\Z^2$; since $\cV_j\subseteq[0,1)^2$ we have that $\bmm_j\in W_{\btau}$, and by construction $g_{\btau}^{-1}\bmm_j\in\cV_j$. By Lemma \ref{lem.Dict} and Proposition \ref{prop.OpenSet},
\[
\tau_1\tau_2\,s(\bmm_j)=F\bigl(\hat g_{\btau} z_{\balph},\,g_{\btau}^{-1}\bmm_j\bigr)=\epsilon\,\bigl(j+a_j(\hat g_{\btau} z_{\balph})\bigr),
\]
and the numbers on the right hand side are distinct for distinct $j$, because $|a_j|\le 1/100$. It follows that $G(\balph,\btau)\ge\lfloor\lambda/\epsilon\rfloor$. Since $\epsilon\in(0,\epsilon_0]$ was arbitrary, this completes the proof of Theorem \ref{thm.TotReal}.

%We remark that the argument gives no information about the size of the $\btau$ which it produces: the density statement in Shapira's theorem is not effective, and the set $\cU$ of Proposition \ref{prop.OpenSet} shrinks as $\epsilon\rar 0$.

\section{Cubic fields with a complex embedding}\label{sec.Prelim}

For the remainder of the paper $K\subset\R$ is a cubic field with a complex embedding, $1,\alpha_1,\alpha_2$ is a $\Q$-basis for $K$, and
\[
\Lambda=\Z+\Z\alpha_1+\Z\alpha_2 .
\]
For $\beta\in\Lambda$ we write $\beta=n+m_1\alpha_1+m_2\alpha_2$ with $n\in\Z$ and $\bmm=\bmm(\beta)\in\Z^2$, and we refer to $\bmm(\beta)$ as the \textit{coefficient vector} of $\beta$ (which determines $\beta$ modulo $\Z$).

The real embedding of $K$ is implicitly fixed by the inclusion $K\subset\R$, and we write $\sigma$ for one of the two complex embeddings, so that
\[
\Tr(\beta)=\beta+2\Re\sigma(\beta),\qquad \Nm(\beta)=\beta\,|\sigma(\beta)|^2\qquad(\beta\in K).
\]
We will repeatedly use the fact that if $\gamma\in K$ and $\sigma(\gamma)\in\R$ then $\gamma\in\Q$: in that case $\sigma(\gamma)=\overline{\sigma(\gamma)}$, so $\gamma$ has at most two distinct conjugates, and $[\Q(\gamma):\Q]\le 2$ divides $[K:\Q]=3$. Finally, with $\alpha_0=1$, we write $\alpha_0^*,\alpha_1^*,\alpha_2^*$ for the dual basis of $\alpha_0,\alpha_1,\alpha_2$ with respect to the trace form, so that $\Tr(\alpha_i\alpha_j^*)=1$ if $i=j$, and $0$ otherwise.

\begin{lemma}\label{lem.Coeff}
For $\beta\in\Lambda$ with coefficient vector $\bmm$, and for $j=1,2$,
\begin{equation}\label{eqn.Coeff}
m_j=\Tr(\beta\alpha_j^*)=\alpha_j^*\beta+2\Re\bigl(\sigma(\alpha_j^*)\sigma(\beta)\bigr).
\end{equation}
Furthermore, $\sigma(\alpha_1^*)$ and $\sigma(\alpha_2^*)$ are non-zero, and their arguments are distinct modulo $\pi$.
\end{lemma}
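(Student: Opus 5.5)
The identity \eqref{eqn.Coeff} comes straight from the dual basis. Write $\beta=m_0\alpha_0+m_1\alpha_1+m_2\alpha_2$ with $m_0=n$. Multiply by $\alpha_j^*$ and take traces. Linearity of $\Tr$ and the defining relations $\Tr(\alpha_i\alpha_j^*)=\delta_{ij}$ give $\Tr(\beta\alpha_j^*)=m_j$. The second expression is then just the formula $\Tr(\gamma)=\gamma+2\Re\sigma(\gamma)$ applied to $\gamma=\beta\alpha_j^*$, using that $\sigma$ is multiplicative.

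For the second assertion, first check that $\sigma(\alpha_j^*)\neq0$. This holds because $\alpha_j^*\neq0$, being a basis element, and $\sigma$ is an injective field embedding.

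Now suppose, for a contradiction, that the arguments of $\sigma(\alpha_1^*)$ and $\sigma(\alpha_2^*)$ agree modulo $\pi$. Then $\sigma(\alpha_1^*)/\sigma(\alpha_2^*)=\sigma(\alpha_1^*/\alpha_2^*)$ is a nonzero real number. By the fact recorded just before the lemma, $\gamma=\alpha_1^*/\alpha_2^*\in\Q$. So $\alpha_1^*=q\alpha_2^*$ for some rational $q$, which contradicts the linear independence over $\Q$ of the dual basis $\alpha_0^*,\alpha_1^*,\alpha_2^*$.

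That independence holds because the trace form is nondegenerate on $K$, since $K/\Q$ is separable, so the dual basis exists and is a basis. Alternatively, one can argue directly from the relations: from $\alpha_1^*=q\alpha_2^*$ we get $1=\Tr(\alpha_1\alpha_1^*)=q\Tr(\alpha_1\alpha_2^*)=0$, a contradiction.

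No step here is a real obstacle. The only point needing care is to phrase "arguments distinct modulo $\pi$" as the ratio $\sigma(\alpha_1^*)/\sigma(\alpha_2^*)$ not being real, and then to invoke the rationality criterion for elements with real $\sigma$-image.
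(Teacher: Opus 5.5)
Your proof is correct and follows the paper's argument. The identity comes from the dual-basis relations combined with $\Tr=\mathrm{id}+2\Re\sigma$. The claim about arguments comes from observing that if they agreed modulo $\pi$, the ratio $\sigma(\alpha_1^*/\alpha_2^*)$ would be real, hence rational, contradicting the $\Q$-independence of $\alpha_1^*,\alpha_2^*$. Your explicit check of that independence via $1=\Tr(\alpha_1\alpha_1^*)=q\Tr(\alpha_1\alpha_2^*)=0$ is a small extra the paper leaves implicit.
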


\begin{proof}
Identity \eqref{eqn.Coeff} follows from the defining property of the dual basis. For the second assertion, $\sigma$ is injective, and if the arguments agreed modulo $\pi$ then $\sigma(\alpha_2^*/\alpha_1^*)$ would be real, so that $\alpha_2^*/\alpha_1^*\in\Q$, contradicting the linear independence of $\alpha_1^*$ and $\alpha_2^*$ over $\Q$.
\end{proof}

For $j=1,2$ put $P_j=|\sigma(\alpha_j^*)|>0$ and $\chi_j=\arg\sigma(\alpha_j^*)$. Replacing $\sigma$ by $\overline\sigma$ replaces each $\chi_j$ by $-\chi_j$, so in view of Lemma \ref{lem.Coeff} we may, and do, choose $\sigma$ so that
\[
\theta_0=\chi_2-\chi_1\in(0,\pi),\qquad \eta_0=\sin\theta_0\in(0,1],
\]
after adjusting $\chi_2$ by a multiple of $2\pi$.

\begin{lemma}\label{lem.Unit}
There is $u\in K$ with $0<u<1$ and $u\Lambda=\Lambda$. For any such $u$ we have that $|\sigma(u)|=u^{-1/2}$, and the number
\[
\vartheta=\frac{\arg\sigma(u)}{2\pi}
\]
is irrational. Consequently, for every $k_0\in\N$, the set $\{k\vartheta\bmod 1:k\ge k_0\}$ is dense in $\R/\Z$.
\end{lemma}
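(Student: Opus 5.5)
The plan has three parts: produce $u$ from the unit theorem applied to the order $\Lambda$, compute $|\sigma(u)|$ from the norm, and derive irrationality of $\vartheta$ from the fact that $\sigma(\gamma)\in\R$ forces $\gamma\in\Q$. Density then follows from Weyl (or Kronecker).

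\emph{Existence of $u$.} Let $\mathcal O=\{\gamma\in K:\gamma\Lambda\subseteq\Lambda\}$, the multiplier ring of the lattice $\Lambda$. It is a subring of $K$ containing $\Z$. It is contained in $\Lambda$ because $1\in\Lambda$, so it is finitely generated as a $\Z$-module. It has rank $3$, since for any $\gamma\in K$ some nonzero integer multiple of $\gamma$ maps $\Lambda$ into $\Lambda$. Hence $\mathcal O$ is an order in $K$.

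By Dirichlet's unit theorem, the unit group of an order in a field with one real and one pair of complex embeddings has rank $r_1+r_2-1=1$. So $\mathcal O^\times$ contains a unit $v$ of infinite order. Replacing $v$ by $\pm v^{\pm1}$ gives $0<v<1$; note $v\neq 1$ since $v$ has infinite order. Put $u=v$. Both $u$ and $u^{-1}$ lie in $\mathcal O$, so $u\Lambda\subseteq\Lambda$ and $u^{-1}\Lambda\subseteq\Lambda$, which gives $u\Lambda=\Lambda$.

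\emph{The modulus of $\sigma(u)$.} Conversely, let $u$ be any element with $0<u<1$ and $u\Lambda=\Lambda$. Then $u$ and $u^{-1}$ both lie in $\mathcal O\subseteq\Lambda$. The norm of any element of an order is an integer, so $\Nm(u)$ and $\Nm(u^{-1})$ are integers with product $1$, giving $\Nm(u)=\pm1$. From $\Nm(u)=u\,|\sigma(u)|^2$ with $u>0$ we get $\Nm(u)>0$, hence $\Nm(u)=1$ and $|\sigma(u)|=u^{-1/2}$.

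\emph{Irrationality of $\vartheta$.} Suppose $\vartheta=p/q$ with $q\in\N$. Then $\sigma(u)^q=\sigma(u^q)$ is real, so by the fact noted at the start of this section $u^q\in\Q$. A rational unit of $\mathcal O$ is $\pm1$, since its norm is its cube and must be $\pm1$. But $0<u^q<1$, a contradiction. Hence $\vartheta\notin\Q$.

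\emph{Density.} For irrational $\vartheta$ the sequence $k\vartheta$ is equidistributed modulo $1$ by Weyl. Dropping the finitely many terms with $k<k_0$ does not affect this, so $\{k\vartheta\bmod 1:k\ge k_0\}$ is dense. Alternatively, $\{k\vartheta\}_{k\ge1}$ has infinitely many distinct terms, so two of them, with indices $k<k'$, are within $\delta$ of each other. Multiples of $(k'-k)\vartheta$ then form a $\delta$-net of $\R/\Z$, and translating by $k_0\vartheta$ keeps all indices at least $k_0$.

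The only step requiring care is identifying the multiplier ring as an order, so that Dirichlet's theorem and the integrality of norms apply. This is standard, and everything else is direct.
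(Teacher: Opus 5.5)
Your proof is correct and follows essentially the same route as the paper: the multiplier ring of $\Lambda$ is an order, Dirichlet's unit theorem supplies $u$, the norm gives $|\sigma(u)|=u^{-1/2}$, and a rational $\vartheta$ would force $\sigma(u^q)\in\R$, hence $u^q\in\Q$ is a rational unit, which is impossible since $0<u^q<1$. The differences are cosmetic: you use $u^q$ where the paper uses $u^{2q}$, and you spell out the density step, which the paper leaves implicit.
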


\begin{proof}
Let $Z_\Lambda=\{x\in K:x\Lambda\subseteq\Lambda\}$. This is a subring of $K$ which contains $\Z$ and is contained in $\Lambda$ (because $1\in\Lambda$), so it is a finitely generated $\Z$-module. Since $\mc O_K\Lambda$ is a finitely generated $\Z$-module which spans $K$ over $\Q$, there is an integer $c\ge 1$ with $c\,\mc O_K\Lambda\subseteq\Lambda$, and therefore $c\,\mc O_K\subseteq Z_\Lambda$. Thus $Z_\Lambda$ is an order in $K$, and by Dirichlet's unit theorem its unit group has rank $1$. Let $u_0\in Z_\Lambda^\times$ have infinite order. Since $u_0$ is real and is not $\pm 1$, we have that $|u_0|\not=1$, and one of the four numbers $\pm u_0^{\pm 1}$ lies in $(0,1)$; call it $u$. Then $u,u^{-1}\in Z_\Lambda$, so $u\Lambda=\Lambda$.

Now let $u$ be any such element. Then $u\in Z_\Lambda^\times$, so $\Nm(u)=\pm 1$, and since $\Nm(u)=u|\sigma(u)|^2>0$ we have that $|\sigma(u)|^2=1/u$. If $\vartheta$ were rational, say $\vartheta=p/q$ with $q\in\N$, then $\sigma(u^{2q})=|\sigma(u)|^{2q}e^{4\pi i p}$ would be real, so $u^{2q}\in\Q$. However, this would imply that $u=\pm 1$, a contradiction.
\end{proof}

We fix such a $u$ for the rest of the paper (for definiteness, the largest one), so that $u$ is determined by $\balph$ and constants depending on $u$ depend only on $\balph$. The next lemma supplies the second ingredient of our construction, which is a sequence of elements $\gamma\in\Lambda$ which are small in absolute value, but whose complex embeddings are large, lie in a prescribed quadrant, and are nearly real.

\begin{lemma}\label{lem.Mult}
There are a constant $c_1>0$ and a sequence of elements $\gamma^{(1)},\gamma^{(2)},\ldots$ of $\Lambda$, say $\gamma^{(r)}=x_r+y_r\alpha_1+z_r\alpha_2$ with $x_r,y_r,z_r\in\Z$, such that, writing $Q_r=|\sigma(\gamma^{(r)})|$,
\begin{enumerate}[topsep=2pt,itemsep=4pt,parsep=4pt]
\item[(i)] $|\gamma^{(r)}|\le 1/2$ and $\gcd(y_r,z_r)=1$;
\item[(ii)] $\Re\sigma(\gamma^{(r)})<0<\Im\sigma(\gamma^{(r)})$;
\item[(iii)] $Q_r\rar\infty$ as $r\rar\infty$, and $Q_r\,\Im\sigma(\gamma^{(r)})\le c_1$ for every $r$.
\end{enumerate}
\end{lemma}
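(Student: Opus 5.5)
The plan is a geometry-of-numbers argument applied to the image of $\Lambda$ under the Minkowski embedding, with one extra step to control the sign of $\Re\sigma(\gamma)$. Identify $\R\times\C$ with $\R^3$ and let $\iota(\beta)=(\beta,\Re\sigma(\beta),\Im\sigma(\beta))$. Since $1,\alpha_1,\alpha_2$ is a $\Q$-basis of $K$, the set $\iota(\Lambda)$ is a lattice in $\R^3$, with some covolume $V$ depending only on $\balph$.

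\textbf{Step 1 (Minkowski).} For a parameter $R\ge 1$ consider the convex, centrally symmetric body
\[
B_R=\bigl\{(x,a,b):|x|\le \tfrac12,\ |a|\le R,\ |b|\le c_1/(2R)\bigr\}.
\]
Its volume is $2c_1$, so choosing $c_1=4V$ makes Minkowski's theorem give a non-zero $\gamma\in\Lambda$ with $\iota(\gamma)\in B_R$. We check that $\Im\sigma(\gamma)\neq 0$. If it were zero, then $\gamma\in\Q\cap\Lambda$ by the observation in Section \ref{sec.Prelim}. Hence $\gamma\in\Z$, and $|\gamma|\le 1/2$ would force $\gamma=0$, a contradiction.

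\textbf{Step 2 (largeness).} We show $Q_r\rar\infty$ along a suitable sequence $R_r\rar\infty$. Only finitely many lattice points satisfy $|x|\le1/2$ and $|\sigma|\le M$ for a fixed $M$. On that finite set, $|\Im\sigma|$ is bounded below by a positive constant, using Step 1. The admissible height $c_1/(2R)\rar0$, so for each $M$ the point produced for large $R$ must have $Q>M$. Therefore $Q_r\rar\infty$. We also get the bound $Q_r\le \sqrt{R^2+1}\le 2R$, and hence $Q_r\,|\Im\sigma(\gamma^{(r)})|\le c_1$, after enlarging $c_1$ if necessary.

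\textbf{Step 3 (primitivity).} Write $\gamma=x+y\alpha_1+z\alpha_2$ and let $g=\gcd(y,z)$. We have $g\ne0$ because $\gamma\notin\Z$. Replace $\gamma$ by $\gamma'=(\gamma-\{x\}_g)/g$, where the integer shift is chosen so that $|\gamma'|\le 1/2$; concretely, take $\gamma'=(x'+y\alpha_1+z\alpha_2)/g$ with $x'\equiv x\pmod g$ chosen to minimise $|\gamma'|$, which is possible since the gaps between admissible values are $1$. This changes $\sigma$ only by a real translation of size at most $1$ and a division by $g$. So $\Im$ shrinks, $Q\cdot\Im$ remains $\ll1$, and Step 2 still applies. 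If the division makes $Q$ bounded along a subsequence, the finiteness argument of Step 2 again rules this out.

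\textbf{Step 4 (signs; the main obstacle).} Replacing $\gamma$ by $-\gamma$ makes $\Im\sigma(\gamma)>0$. The difficulty is then to also force $\Re\sigma(\gamma)<0$, because the symmetric body cannot distinguish the quadrants $(+,+)$ and $(-,+)$.

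My first attempt is to tilt the body. Replace the condition $|b|\le c_1/(2R)$ by
\[
\bigl|b+\tfrac{c_1}{R^2}a\bigr|\le \tfrac{c_1}{2R},
\]
which leaves the volume unchanged. After choosing the sign so that $a\le0$, any lattice point with $|a|>R/2$ automatically has $b>0$ and $b\le 3c_1/(2R)$.

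It therefore remains to guarantee $|a|\gg R$. For this I would use the successive minima of $\iota(\Lambda)$ with respect to the tilted body. The first minimum is at most $1$ by Minkowski. Moreover, vectors with $|a|\le \epsilon R$ and $|x|\le 1/2$ lie in a slab region containing only finitely many lattice points, and these have $|b|$ bounded below, exactly as in Step 2. So for $R$ large every point of the body has $|a|$ comparable to $R$, after adjusting constants.

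If this quantitative step fails, the fallback is to use the unit $u$ from Lemma \ref{lem.Unit}. Passing from $\gamma$ to $u^{-1}\gamma$ or $u\gamma$ rescales $\sigma$ by $u^{\pm1/2}$ and rotates it by $\pm2\pi\vartheta$. I would use the freedom in the choice of $R$, together with the irrationality of $\vartheta$, to select a subsequence on which the nearly real vector $\sigma(\gamma)$ points in the negative real direction.
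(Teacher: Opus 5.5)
Steps 1--3 are essentially sound (in Step 3 you need $g\mid x'$, not $x'\equiv x\pmod g$, for $\gamma'$ to lie in $\Lambda$). Step 4, which you rightly call the crux, is not proved, and neither of your routes closes it.

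\textbf{The tilted body.} A point of the tilted body is forced into the right quadrant only if $|a|>R/2$, and your argument for $|a|\gg R$ fails. The region $\{|x|\le\frac12,\ |a|\le\epsilon R\}$ grows with $R$, so the finiteness argument of Step 2, which used a \emph{fixed} bound on $|\sigma|$, gives no lower bound of the needed size $|b|\gg 1/R$ there. Indeed, for $\gamma=x+y\alpha_1+z\alpha_2$ with $|\gamma|\le\frac12$ one has $b=\Im\sigma(\alpha_2)(z-wy)$, where $w=-\Im\sigma(\alpha_1)/\Im\sigma(\alpha_2)$. Also $|a|=c|y|+O(1)$ for some $c>0$ once $|b|$ is bounded. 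So any version of your claim valid for all large $R$ would force $|y|\,|z-wy|\gg1$ for all $(y,z)\neq(0,0)$, i.e.\ $w$ badly approximable. But $w$ is fixed by complex conjugation on the Galois closure of $K$, hence is a cubic irrational in $K$; for $\balph=(2^{1/3},4^{1/3})$ one gets $w=2^{-1/3}$. Bounded partial quotients for such numbers is a famous open problem. If $w$ has a convergent $p/q$ with $q\,|qw-p|$ very small, then for suitable $R\asymp q$ the point with $(y,z)=\pm(q,p)$ lies in your body with $|a|<R/2$. Nothing stops Minkowski's theorem from returning it, and its quadrant is then dictated by the parity of the convergent, not by the tilt.

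\textbf{The unit fallback.} This fails as well. For $u^k\gamma$, $\sigma$ is rotated by $2\pi k\vartheta$ and $Q^2$ is multiplied by $u^{-k}$. So $u^k\gamma$ satisfies (iii) only if $2\pi k\vartheta+\mu$ lies within $O(u^k/Q^2)$ of $\pi\Z$, and the irrationality of $\vartheta$ says nothing at that scale.

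\textbf{The missing idea.} What the paper uses is a second, independent source of sign. Write $\Im\sigma(\gamma)=\mc I(y,z)=\Im\sigma(\alpha_2)(z-wy)$ and $\Re\sigma(\gamma)=\mc T(y,z)+\gamma$, where $\mc T$ is a linear form in $(y,z)$ independent of $\mc I$, so $\mc T(1,w)\neq0$. One takes $(y,z)=\delta(q_r,p_r)$ for convergents $p_r/q_r$ of $w$, with $x$ chosen so that $|\gamma|\le\frac12$. Then $\Re\sigma(\gamma)=\delta q_r\mc T(1,w)+O(1)$, so the global sign $\delta$ fixes $\Re\sigma(\gamma)<0$. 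Meanwhile $\Im\sigma(\gamma)=\delta\,\Im\sigma(\alpha_2)(p_r-q_rw)$ alternates in sign with $r$, so choosing convergents of the right parity makes it positive. This yields (i)--(iii) directly, including $\gcd(y,z)=1$ and $Q\asymp q_r$.

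Your tilted body can be rescued in the same spirit: take $R$ a large multiple of $q_r$, steepen the tilt, and use the best-approximation property of convergents to exclude lattice points of the body with $|y|<q_r$. But then it is the continued fraction of $w$, not Minkowski's theorem, doing the work.
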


\begin{proof}
For $(y,z)\in\R^2$ put
\[
\mc I(y,z)=y\,\Im\sigma(\alpha_1)+z\,\Im\sigma(\alpha_2),\]
and
\[\mc T(y,z)=\tfrac12\bigl(y(\Tr(\alpha_1)-3\alpha_1)+z(\Tr(\alpha_2)-3\alpha_2)\bigr).
\]
If $\gamma=x+y\alpha_1+z\alpha_2$ with $x,y,z\in\Z$ then \[\mc I(y,z)=\Im\sigma(\gamma).\] Furthermore, since
\[\Re\sigma(\gamma)=\frac12(\Tr(\gamma)-\gamma)\quad \text{and}\quad \Tr(\gamma)=3x+y\Tr(\alpha_1)+z\Tr(\alpha_2),\]we also have that
\begin{equation}\label{eqn.ReSigma}
\mc T(y,z)=\Re\sigma(\gamma)-\gamma.
\end{equation}

The linear forms $\mc I$ and $\mc T$ are linearly independent. To see this, for real $y,z$ put $\nu=y\alpha_1+z\alpha_2\in\R$ and $\zeta=y\sigma(\alpha_1)+z\sigma(\alpha_2)\in\C$. Then $\mc T(y,z)=\Re\zeta-\nu$, so if $(y,z)$ annihilated both forms then the $\R$-linear map $\R^3\rar\R\times\C$ given by $(x,y,z)\mapsto(x+y\alpha_1+z\alpha_2,\ x+y\sigma(\alpha_1)+z\sigma(\alpha_2))$ would send $(0,y,z)$ and $(\nu,0,0)$ to the same point $(\nu,\nu)$. This map is an isomorphism, since its matrix is obtained from the matrix $(\sigma_i(\alpha_j))$ of embeddings, hence $y=z=0$.

Next, $\mc I(y,z)\not=0$ for $(y,z)\in\Z^2\setminus\{(0,0)\}$, since otherwise $\sigma(y\alpha_1+z\alpha_2)$ would be real, so that $y\alpha_1+z\alpha_2\in\Q$, contradicting the linear independence of $1,\alpha_1,\alpha_2$. In particular $\Im\sigma(\alpha_2)\not=0$, and
\[
w=-\frac{\Im\sigma(\alpha_1)}{\Im\sigma(\alpha_2)}\qquad\text{satisfies}\qquad \mc I(y,z)=\Im\sigma(\alpha_2)\,(z-wy),
\]
with $w$ irrational. By linear independence $\mc T(1,w)\not=0$, and we write $\mc T(1,w)=-\delta c_0$ with $\delta\in\{\pm 1\}$ and $c_0>0$.

Let $p_r/q_r$ be the convergents of the continued fraction expansion of $w$, so that $q_r\rar\infty$, $\gcd(p_r,q_r)=1$, $0<|q_rw-p_r|<1/q_r$, and $q_rw-p_r$ alternates in sign. For the infinitely many $r$ for which the sign of $p_r-q_rw$ equals $\delta$ times the sign of $\Im\sigma(\alpha_2)$, put $(y,z)=\delta(q_r,p_r)$. Then $\gcd(y,z)=1$, $y=\delta q_r$, and
\[
\mc I(y,z)=\delta\,\Im\sigma(\alpha_2)\,(p_r-q_rw)\in\Bigl(0,\frac{|\Im\sigma(\alpha_2)|}{q_r}\Bigr).
\]
Since $(y,z)=y(1,w)+(0,z-wy)$ we have that
\[
\mc T(y,z)=y\,\mc T(1,w)+(z-wy)\,\mc T(0,1)=-c_0q_r+O(1/q_r).
\]
Choose $x\in\Z$ to be a nearest integer to $-(y\alpha_1+z\alpha_2)$, and let $\gamma=x+y\alpha_1+z\alpha_2$, so that $|\gamma|\le 1/2$. By \eqref{eqn.ReSigma}, $\Re\sigma(\gamma)\le\frac12-c_0q_r+O(1/q_r)$, which is negative for $r$ large, and together with $\Im\sigma(\gamma)=\mc I(y,z)>0$ this gives (ii). Finally $Q=|\sigma(\gamma)|\ge|\Re\sigma(\gamma)|\rar\infty$, while
\[
Q\le|\Re\sigma(\gamma)|+|\Im\sigma(\gamma)|\le c_0q_r+1
\]
for $r$ large, so that $Q\,\Im\sigma(\gamma)\le(c_0q_r+1)|\Im\sigma(\alpha_2)|/q_r\le 2c_0|\Im\sigma(\alpha_2)|$ once $q_r\ge 1/c_0$. Discarding finitely many $r$ and relabelling gives the sequence in the statement, with $c_1=2c_0|\Im\sigma(\alpha_2)|$.
\end{proof}

For $\gamma\in\Lambda$ as in Lemma \ref{lem.Mult} we write $Q=|\sigma(\gamma)|$ and
\begin{equation}\label{eqn.MuDef}
\sigma(\gamma)=-Q\,e^{i\mu},\qquad \mu\in(-\pi/2,0).
\end{equation}
Since $|\sin\mu|\ge 2|\mu|/\pi$ for $|\mu|\le\pi/2$, we have that
\begin{equation}\label{eqn.MuBd}
0<|\mu|\le\frac{\pi}{2}\cdot\frac{\Im\sigma(\gamma)}{Q},
\end{equation}
and in particular $|\mu|\le\pi c_1/(2Q^2)$ for the elements of the sequence in Lemma \ref{lem.Mult}.

\section{A ladder of gaps}\label{sec.Ladder}

In this section we show how suitably chosen elements of $\Lambda$ can be used to produce many distinct gaps in $S(\balph,\btau)$. Suppose that $\beta,\beta'\in\Lambda$ have coefficient vectors $\bmm,\bmm'$ with $m_1,m_2<0<m_1',m_2'$, that $m_1'$ and $m_2'$ are large compared with $|m_1|$ and $|m_2|$, respectively, that $\beta>0$ is very small, and that $|\beta'|<\beta$. Taking $\btau=(m_1',m_2')$, the vector $j\bmm+\bmm'=(m_1'-j|m_1|,\,m_2'-j|m_2|)$ lies in $W_{\btau}$ for every integer $1\le j\le\min\{m_1'/|m_1|,m_2'/|m_2|\}$ and, since $\beta$ is very small, for these $j$ the fractional parts of $(j\bmm+\bmm')\cdot\balph$ are the numbers $j\beta+\beta'$, an increasing `ladder' of small positive numbers, and we refer to the individual numbers $j\beta+\beta'$ as its `rungs'.

We will show that, by moving the base point $\bk_0$ in the formula
\[s(\bk_0)=\min_{\bk\in W_{\btau}-\bk_0}\langle\bk\cdot\balph\rangle,\]
which follows directly from the definition of $s(\bk_0)$, each rung of the ladder can be made to appear as a gap. Note that, since $1,\alpha_1,\alpha_2$ are linearly independent over $\Q$, the points $\bk\cdot\balph\bmod 1$ with $\bk\in W_{\btau}$ are distinct.

\begin{proposition}\label{prop.Ladder}
Let $\beta=n+\bmm\cdot\balph$ and $\beta'=n'+\bmm'\cdot\balph$ be elements of $\Lambda$, and suppose that
\begin{enumerate}[topsep=2pt,itemsep=4pt,parsep=4pt]
\item[(i)] $\beta>0$ and $m_1,m_2<0$;
\item[(ii)] $m_1',m_2'>0$ and, writing $M=m_1'$, $N=m_2'$, $C_1=M/|m_1|$ and $C_2=N/|m_2|$, we have that $C_1,C_2\ge 3$;
\item[(iii)] $|\beta'|<\beta$;
\item[(iv)] $4MN\beta<1$;
\item[(v)] the three $2\times 2$ minors of the matrix $\left(\begin{smallmatrix}n&m_1&m_2\\ n'&m_1'&m_2'\end{smallmatrix}\right)$ have greatest common divisor $1$.
\end{enumerate}
Then $G(\balph,(M,N))\ge\min\{\lfloor C_1\rfloor,\lfloor C_2\rfloor\}$.
\end{proposition}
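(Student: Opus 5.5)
The plan is to exhibit, for each integer $1\le j\le J:=\min\{\lfloor C_1\rfloor,\lfloor C_2\rfloor\}$, a base point $\bk_0^{(j)}\in W_{\btau}$ with $\btau=(M,N)$ such that $s(\bk_0^{(j)})=j\beta+\beta'$. These $J$ numbers are distinct, which gives the claim. I will use the formula $s(\bk_0)=\min_{\bk\in W_{\btau}-\bk_0}\langle\bk\cdot\balph\rangle$ recorded before the proposition.

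\textbf{Choice of base point.} I take $\bk_0^{(j)}=(0,\,j|m_2|-1)$, which lies in $W_{\btau}$ because $j|m_2|\le C_2|m_2|=N$. Because the first coordinate is $0$, $\bk_0+a\bmm\notin W_{\btau}$ for every $a\ge1$. So the small values $a\beta$ never occur, and this is the reason for not using a symmetric base point. For the family $\bk_0+i\bmm+\bmm'=(M-i|m_1|,\;N-1+(j-i)|m_2|)$, membership in $W_{\btau}$ holds exactly when $j\le i\le\min\{C_1,C_2\}$. Since $4MN\beta<1$ and $|\beta'|<\beta$, the corresponding values $i\beta+\beta'$ lie in $(0,1)$, so they equal $\langle(i\bmm+\bmm')\cdot\balph\rangle$. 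They increase in $i$, so the minimum over this family is $j\beta+\beta'$, attained at $i=j$.

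\textbf{Excluding all other $\bk$.} The remaining task is to show that no other $\bk\in W_{\btau}-\bk_0$ satisfies $\langle\bk\cdot\balph\rangle<j\beta+\beta'$.
\begin{enumerate}
\item Suppose such a $\bk$ exists, and let $\gamma=a_0+\bk\cdot\balph\in(0,j\beta+\beta')$ be its positive representative. Note $|k_1|<M$ and $|k_2|<N$.
\item Consider the integer determinant $D$ of the $3\times3$ matrix with rows $(n,m_1,m_2)$, $(n',m_1',m_2')$, $(a_0,k_1,k_2)$. Adding $\alpha_1,\alpha_2$ times the last two columns to the first replaces the first column by $(\beta,\beta',\gamma)$.
\item Expanding, I aim to show $|D|<1$ using (ii) and (iv). The bound $|\beta'|<\beta$ controls the $\beta'$ term, and $C_1,C_2\ge3$ makes $|m_1|N,\ |m_2|M\le MN/3$.
\item Then $D=0$, so $\gamma\in\Q\beta+\Q\beta'$. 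By (v) the rows $(n,\bmm)$ and $(n',\bmm')$ span a primitive rank-two sublattice of $\Z^3$, so $\gamma=a\beta+b\beta'$ with $a,b\in\Z$ and $\bk=a\bmm+b\bmm'$.
\item Finally I do a short case analysis on $b$, using the box constraints $\bk_0+\bk\in W_{\btau}$:
 \begin{itemize}
 \item $b=0$ forces $a\le0$, so $\gamma$ is not small and positive;
 \item $b=1$ is the family handled above;
 \item $b\le-1$ forces the value to be near $1$ modulo $1$;
 \item $b\ge2$ forces $a\ge C_1$ through the first coordinate, making $\gamma\ge C_1\beta-2\beta>j\beta+\beta'$ or otherwise incompatible with the box.
 \end{itemize}
\end{enumerate}

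\textbf{Main obstacle.} The step I expect to be hardest is the determinant bound in step 3. The term $\gamma\,|m_1N-m_2M|$ is only bounded a priori by about $(C+1)\beta\cdot MN/3$, which is not small enough for a naive estimate.

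My first attempt will be to reduce $\gamma$ first: replace $\bk$ by $\bk-(j)\bmm-\bmm'$, or subtract a suitable multiple of $\beta$, so that the new element has absolute value $<\beta$ and a coefficient vector still of size $O(M)\times O(N)$. Then all three terms are $O(MN\beta)$, and (iv) with the factor $4$ should give $|D|<1$.

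If the constants do not close, I will run the argument as a contradiction instead. Take a minimal offending $\gamma$ and compare it with the nearest rung $i\beta+\beta'$. The difference then has value less than $\beta$, and I apply the determinant argument to that difference.
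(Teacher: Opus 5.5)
\textbf{Verdict: genuine gap in step 3.} Your architecture matches the paper's:
\begin{itemize}
\item base points with first coordinate $0$ (your $(0,j|m_2|-1)$ works as well as the paper's $(0,(j-1)|m_2|)$);
\item the $3\times3$ determinant to force a small positive element into $\Q\beta+\Q\beta'$;
\item hypothesis (v) to make it an integer combination;
\item a case analysis on the coefficient of $\beta'$.
\end{itemize}
But the step everything rests on, $|D|<1$, is never proved. You declare the direct estimate insufficient and offer two untested workarounds.

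\textbf{The direct estimate closes.} Your loss comes from bounding $|m_1|N=MN/C_1$ and $|m_2|M=MN/C_2$ separately by $MN/3$. The coefficient of $\gamma$ in $D$ is $m_1m_2'-m_2m_1'=MN(1/C_2-1/C_1)$. This is a \emph{difference} of two numbers in $(0,1/3]$, so its absolute value is at most $MN/\min\{C_1,C_2\}$. That cancels the factor in $0<\gamma<(J+1)\beta\le(\min\{C_1,C_2\}+1)\beta$. With $|k_1|<M$, $|k_2|<N$, $|\beta'|<\beta$ and $C_i\ge3$ you get
\[
|D|<MN\beta\Bigl(1+\frac{1}{\min\{C_1,C_2\}}\Bigr)+MN\beta\Bigl(1+\frac{1}{C_2}\Bigr)+MN\beta\Bigl(1+\frac{1}{C_1}\Bigr)\le 4MN\beta<1 .
\]
This is exactly the paper's computation. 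The paper runs it with the top rung $J\beta+\beta'$, so that one lemma serves every $j$.

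\textbf{Your fallbacks do not close as sketched.} Reducing $\gamma$ by multiples of $\beta$, or by a rung, leaves $D$ unchanged. However, it lets a coordinate of the reduced coefficient vector grow to nearly $2M$ or $2N$. The same triangle inequality then gives a constant larger than $4$ (about $\tfrac{13}{3}$ for reduction modulo $\beta$), which (iv) cannot absorb.

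\textbf{The case $b\ge2$ also needs repair.} The inequality $C_1\beta-2\beta>j\beta+\beta'$ is false in general; take $j=\lfloor C_1\rfloor$. Use integrality instead. The condition $0\le k_1=bM-a|m_1|<M$ means $(b-1)C_1<a\le bC_1$. So for $b\ge2$ you get $a\ge(b-1)J+1\ge J+b-1$, and hence
\[
\gamma-(J\beta+\beta')=(a-J)\beta+(b-1)\beta'\ge(b-1)(\beta+\beta')>0 .
\]

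\textbf{The case $b\le-1$ should be stated exactly.} The same inequality gives $a\le bC_1$, so $\gamma<-|b|(C_1-1)\beta<0$. At that point $\gamma=a\beta+b\beta'$ is an exact real identity, so this contradicts $\gamma>0$ directly; there is no ``modulo $1$'' to invoke.

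\textbf{Minor point.} The admissible range for your rung family is $j\le i\le C_1$ with $i<C_2+j$, not $j\le i\le\min\{C_1,C_2\}$. This does not affect the minimum being at $i=j$.
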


\begin{proof}
Write $a=1/C_1=|m_1|/M$ and $b=1/C_2=|m_2|/N$, so that $a,b\le 1/3$, and write $\lambda_0=MN\beta$ and $\lambda_0'=MN\beta'$, so that $|\lambda_0'|<\lambda_0$ by (iii), and $\lambda_0<1/4$ by (iv). Put $J=\min\{\lfloor C_1\rfloor,\lfloor C_2\rfloor\}\ge 3$ and $Y=J\lambda_0+\lambda_0'$. Then $Y>0$, and since $J\le 1/\max\{a,b\}$ we have that $Y\le\lambda_0(J+1)\le\lambda_0\bigl(1/\max\{a,b\}+1\bigr)$.

\noindent\emph{Lattice points below the top of the ladder:} We first show that every $(\tilde n,\bk)\in\Z\times\Z^2$ with
\begin{equation}\label{eqn.Forcing}
0<\tilde n+\bk\cdot\balph\le\frac{Y}{MN},\qquad |k_1|<M,\qquad |k_2|<N,
\end{equation}
is an integer combination of $(n,m_1,m_2)$ and $(n',m_1',m_2')$. Let $\tilde\beta=\tilde n+\bk\cdot\balph$ and let $D$ be the determinant of the integer matrix with rows $(n,m_1,m_2)$, $(n',m_1',m_2')$ and $(\tilde n,k_1,k_2)$. Adding $\alpha_1$ times the second column and $\alpha_2$ times the third column to the first replaces the first column by $(\beta,\beta',\tilde\beta)^t$ without changing $D$, and expanding along the last row then gives
\[
D=\tilde\beta\,(m_1m_2'-m_2m_1')-k_1(\beta m_2'-m_2\beta')+k_2(\beta m_1'-m_1\beta').
\]
Here
\[m_1m_2'-m_2m_1'=|m_2|M-|m_1|N=MN(b-a),\]
while
\[|\beta m_2'-m_2\beta'|\le\beta N+|m_2||\beta'|\quad \text{and} \quad|\beta m_1'-m_1\beta'|\le\beta M+|m_1||\beta'|.\]
Using \eqref{eqn.Forcing} and $|\lambda_0'|<\lambda_0$ we have that
\[
|D|< Y|b-a|+2\lambda_0+(a+b)|\lambda_0'|\le Y|b-a|+\lambda_0(2+a+b).
\]
Now $|b-a|\le\max\{a,b\}\le 1/3$, so by our bound for $Y$ and by (iv),
\[
|D|<\lambda_0\bigl(1+\max\{a,b\}+2+a+b\bigr)\le 4\lambda_0<1.
\]
Since $D$ is an integer, $D=0$, and $(\tilde n,\bk)$ lies in the $\Q$-span of the first two rows, which is two dimensional because by (v) the minors are not all zero. By the theory of the Smith normal form, the greatest common divisor of the $2\times 2$ minors of an integer $2\times 3$ matrix of rank two is the product of the elementary divisors of the subgroup of $\Z^3$ generated by its rows, so (v) says precisely that this subgroup is the intersection of its $\Q$-span with $\Z^3$. This proves the claim.

We also note for later use that
\begin{equation}\label{eqn.YSmall}
\frac{Y}{MN}=J\beta+\beta'<(C_1+1)\beta\le 2C_1\beta=\frac{2M\beta}{|m_1|}\le 2M\beta=\frac{2\lambda_0}{N}<1 .
\end{equation}

\noindent\emph{The rungs:} Fix $j_0$ with $1\le j_0\le J$ and put $\bk_0=(0,(j_0-1)|m_2|)$. Then $\bk_0\in W_{(M,N)}$, because $(j_0-1)|m_2|\le(C_2-1)|m_2|=N-|m_2|<N$. We will show that $s(\bk_0)=j_0\beta+\beta'$. Since the $J$ numbers $j_0\beta+\beta'$ are distinct, this will prove the proposition.

For $j\ge 1$ the vector $\bk=j\bmm+\bmm'$ has $k_1=M-j|m_1|$ and $k_2=N-j|m_2|$, so $\bk\in W_{(M,N)}-\bk_0$ if and only if $0\le M-j|m_1|<M$ and $0\le N-(j-j_0+1)|m_2|<N$, which is to say that
\begin{equation}\label{eqn.RungRange}
1\le j\le C_1\qquad\text{and}\qquad j_0\le j\le C_2+j_0-1 .
\end{equation}
Both conditions hold for $j=j_0$, and for this $j$ we have that $\bk\cdot\balph\equiv j_0\beta+\beta'\bmod 1$, with $0<\beta+\beta'\le j_0\beta+\beta'\le Y/MN<1$ by (iii) and \eqref{eqn.YSmall}. Hence $\langle\bk\cdot\balph\rangle=j_0\beta+\beta'$, and $s(\bk_0)\le j_0\beta+\beta'$.

On the other hand, suppose that $\bk\in W_{(M,N)}-\bk_0$ satisfies $\langle\bk\cdot\balph\rangle\le j_0\beta+\beta'$, and let $\tilde n\in\Z$ be such that $\tilde n+\bk\cdot\balph=\langle\bk\cdot\balph\rangle$. Then $0\le k_1<M$ and $|k_2|<N$. By our choice of $\tilde n$ and the inequality $j_0\beta+\beta'\le Y/MN$ from the previous paragraph, we have that \[0<\tilde n+\bk\cdot\balph\le Y/MN.\]
By the first part of the proof, there are $a_1,a_2\in\Z$ with $(\tilde n,\bk)=a_1(n,m_1,m_2)+a_2(n',m_1',m_2')$, and therefore
\[
k_1=a_2M-a_1|m_1|,\qquad \langle\bk\cdot\balph\rangle=a_1\beta+a_2\beta' .
\]
The condition $0\le k_1<M$ says that $a_1/C_1\le a_2<a_1/C_1+1$, so that $a_2=\lceil a_1/C_1\rceil$. If $a_1\le 0$ then $a_1/C_1\le a_2\le 0$, so $|a_2|\le|a_1|/C_1$ and
\[
a_1\beta+a_2\beta'\le-|a_1|\beta+|a_1|\beta/C_1\le 0,
\]
which contradicts $\langle\bk\cdot\balph\rangle>0$. Therefore $a_1\ge 1$ and $a_2\ge 1$. If $a_2\ge 2$ then $a_1>(a_2-1)C_1\ge(a_2-1)J$, so that $a_1\ge(a_2-1)J+1\ge J+a_2-1$, and
\[
a_1\beta+a_2\beta'-\frac{Y}{MN}=(a_1-J)\beta+(a_2-1)\beta'\ge(a_2-1)(\beta+\beta')>0,
\]
which contradicts $\langle\bk\cdot\balph\rangle\le Y/MN$. Therefore $a_2=1$ and $\bk=a_1\bmm+\bmm'$ with $a_1\ge 1$, and \eqref{eqn.RungRange} gives $a_1\ge j_0$, so that $\langle\bk\cdot\balph\rangle=a_1\beta+\beta'\ge j_0\beta+\beta'$. This shows that $s(\bk_0)=j_0\beta+\beta'$, and completes the proof.
\end{proof}

\section{Proof of Theorem \ref{thm.Complex}}\label{sec.Complex}

The elements of $\Lambda$ which we feed into Proposition \ref{prop.Ladder} are $\beta=u^k$ and $\beta'=\gamma u^k$, with $u$ as in Lemma \ref{lem.Unit}, $\gamma$ from the sequence in Lemma \ref{lem.Mult}, and $k$ a large integer. The point of this choice is that multiplying both elements simultaneously by $u^k$ does not affect hypotheses (iii) and (v) of Proposition \ref{prop.Ladder}, but it allows us to rotate the complex embeddings of the pair into a favourable position, by choosing $k$ so that $k\vartheta$ lies in a prescribed arc modulo $1$.

\begin{lemma}\label{lem.Pair}
Let $\gamma=x+y\alpha_1+z\alpha_2\in\Lambda$ satisfy (i) and (ii) of Lemma \ref{lem.Mult}, let $Q$ and $\mu$ be as in \eqref{eqn.MuDef}, let $k\in\N$, and put
\[
\beta=u^k,\qquad\beta'=\gamma u^k,\qquad t=u^k,\qquad\varphi=2\pi k\vartheta .
\]
Then $\beta,\beta'\in\Lambda$, their coefficient vectors $\bmm,\bmm'$ satisfy
\begin{align}
t^{1/2}m_j&=2P_j\cos(\varphi+\chi_j)+\alpha_j^*\,t^{3/2},\label{eqn.PairM}\\
t^{1/2}m_j'&=-2P_jQ\cos(\varphi+\mu+\chi_j)+\alpha_j^*\gamma\,t^{3/2},\label{eqn.PairMp}
\end{align}
for $j=1,2$, and the three $2\times 2$ minors of the matrix of coefficients of $\beta,\beta'$ with respect to $1,\alpha_1,\alpha_2$ have greatest common divisor $1$.
\end{lemma}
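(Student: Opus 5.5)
Membership is immediate. By Lemma \ref{lem.Unit}, $u\Lambda=\Lambda$, so $u^k\Lambda=\Lambda$ for every $k\in\N$. Since $1,\gamma\in\Lambda$, it follows that $\beta=u^k\cdot 1\in\Lambda$ and $\beta'=u^k\gamma\in\Lambda$.

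For the coefficient formulas I will apply \eqref{eqn.Coeff} of Lemma \ref{lem.Coeff}, which gives $m_j=\alpha_j^*\beta+2\Re(\sigma(\alpha_j^*)\sigma(\beta))$. By Lemma \ref{lem.Unit}, $\sigma(u)=u^{-1/2}e^{2\pi i\vartheta}$, so
\[
\sigma(u^k)=t^{-1/2}e^{i\varphi},\qquad \sigma(\alpha_j^*)=P_je^{i\chi_j}.
\]
Hence $2\Re(\sigma(\alpha_j^*)\sigma(\beta))=2P_jt^{-1/2}\cos(\varphi+\chi_j)$. Multiplying the identity for $m_j$ by $t^{1/2}$ gives \eqref{eqn.PairM}. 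For $\beta'$ we use \eqref{eqn.MuDef}: $\sigma(\beta')=\sigma(\gamma)\sigma(u)^k=-Qt^{-1/2}e^{i(\varphi+\mu)}$, and the real term is $\alpha_j^*\gamma t$. Multiplying by $t^{1/2}$ again gives \eqref{eqn.PairMp}. These steps are routine.

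The substantive part is the gcd of the minors. Multiplication by $u^k$ is a $\Z$-linear automorphism of $\Lambda$. In the basis $1,\alpha_1,\alpha_2$ it is represented by a matrix $U\in GL(3,\Z)$, because both $u^k$ and $u^{-k}$ preserve $\Lambda$. The coefficient matrix of $(\beta,\beta')$ is then the coefficient matrix of $(1,\gamma)$ multiplied on the right by a matrix in $GL(3,\Z)$ (the transpose of $U$, depending on convention). I will use the description already invoked in the proof of Proposition \ref{prop.Ladder}: the gcd of the $2\times2$ minors of a rank-two integer $2\times3$ matrix equals the index of the row lattice in its saturation. Equivalently, by Cauchy--Binet, the vector of minors transforms under the second exterior power of a unimodular matrix, which is again unimodular, so the gcd is unchanged by right multiplication by an element of $GL(3,\Z)$.

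It therefore suffices to treat the pair $(1,\gamma)$, whose coefficient matrix is $\left(\begin{smallmatrix}1&0&0\\ x&y&z\end{smallmatrix}\right)$. Its minors are $y$, $z$ and $0$. Their gcd is $\gcd(y,z)=1$ by condition (i) of Lemma \ref{lem.Mult}.

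I expect no serious obstacle. The only care needed is to justify the invariance of the gcd of the minors under the unimodular change of coordinates, via either Cauchy--Binet or the saturation interpretation, and to check that $u^k$ acts with an integer inverse, which holds because $u^{-1}\Lambda=\Lambda$ as well.
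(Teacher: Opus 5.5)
Your proposal is correct and follows the paper's proof essentially step for step. Membership comes from $u^k\Lambda=\Lambda$, and the coefficient formulas come from substituting the polar forms of $\sigma(u^k)$, $\sigma(\alpha_j^*)$ and $\sigma(\gamma)$ into \eqref{eqn.Coeff}; the gcd claim comes from the fact that multiplication by $u^k$ acts through $\mr{GL}(3,\Z)$, whose second exterior power (Cauchy--Binet) preserves the gcd of the minors, which reduces everything to the pair $(1,\gamma)$ with minors $y,z,0$.
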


\begin{proof}
We have $\beta,\beta'\in u^k\Lambda=\Lambda$. By Lemma \ref{lem.Unit}, $|\sigma(\beta)|=t^{-1/2}$ and $\arg\sigma(\beta)=\varphi$, so that $|\sigma(\beta')|=Qt^{-1/2}$ and $\arg\sigma(\beta')=\varphi+\mu+\pi$ by \eqref{eqn.MuDef}. Substituting into \eqref{eqn.Coeff} and multiplying by $t^{1/2}$ gives \eqref{eqn.PairM} and \eqref{eqn.PairMp}.

For the last assertion, multiplication by $u^k$ is an automorphism of $\Lambda$, so in coordinates with respect to $1,\alpha_1,\alpha_2$ it is given by a matrix in $\mr{GL}(3,\Z)$. Such a matrix acts on the vector of $2\times 2$ minors of a $2\times 3$ integer matrix by another element of $\mr{GL}(3,\Z)$, and so preserves the greatest common divisor of the minors. The pair $(\beta,\beta')$ is the image of the pair $(1,\gamma)$, whose coefficient matrix $\left(\begin{smallmatrix}1&0&0\\ x&y&z\end{smallmatrix}\right)$ has minors $y,z$ and $0$, and $\gcd(y,z)=1$.
\end{proof}

Now put
\[
\delta_0=\frac{1}{128P_1P_2}.
\]
For $\gamma$ as in Lemma \ref{lem.Pair} with $Q^2\ge 2\delta_0$, let $v_0\in(0,\pi/6]$ be defined by $\sin v_0=\delta_0/Q^2$, and let
\begin{equation}\label{eqn.ArcDef}
\cA_\gamma=\Bigl\{\frac{\pi}{2}-\chi_1-\mu+v:v_0\le v\le 2v_0\Bigr\}\subseteq\R/2\pi\Z .
\end{equation}

\begin{proposition}\label{prop.Window}
There are constants $Q_1,c_2,c_3,c_4>0$ with the following property. Let $\gamma$ be an element of the sequence of Lemma \ref{lem.Mult} with $Q\ge Q_1$, and let $k\ge c_2\log Q$ be an integer with $2\pi k\vartheta\in\cA_\gamma$ modulo $2\pi$. Then $\beta=u^k$ and $\beta'=\gamma u^k$ satisfy hypotheses (i)--(v) of Proposition \ref{prop.Ladder}, with
\[
\min\{C_1,C_2\}\ge\frac{Q}{c_3}\qquad\text{and}\qquad \max\{M,N\}\le c_4\,Q\,u^{-k/2}.
\]
In particular, with $\btau=(M,N)$,
\begin{equation}\label{eqn.WindowOut}
G(\balph,\btau)\ge\frac{Q}{c_3}-1\qquad\text{and}\qquad\log|\btau|\le\frac{k}{2}\log(1/u)+\log(c_4Q).
\end{equation}
\end{proposition}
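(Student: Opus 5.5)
The plan is to read everything off the explicit formulas \eqref{eqn.PairM} and \eqref{eqn.PairMp} of Lemma \ref{lem.Pair}, with $t=u^k$ and $\varphi=2\pi k\vartheta$. Write $\varphi=\frac{\pi}{2}-\chi_1-\mu+v$ modulo $2\pi$ with $v_0\le v\le 2v_0$. The hypothesis $k\ge c_2\log Q$ is used only to make the error terms $\alpha_j^*t^{3/2}$ and $\alpha_j^*\gamma t^{3/2}$ negligible. We have $|\gamma|\le 1/2$, and choosing $c_2$ large gives $t\le Q^{-10}$, say. Then these errors are $O(Q^{-15})$, which is tiny compared with every main term below; the smallest main term has size $\asymp 1/Q$. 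The Proposition then follows from Proposition \ref{prop.Ladder}, once $\lfloor C_i\rfloor\ge C_i-1$ is used to pass to \eqref{eqn.WindowOut}. The bound for $\log|\btau|$ is simply the logarithm of $\max\{M,N\}\le c_4Qu^{-k/2}$.

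\emph{The four coefficients.} Substitute the angles. For $m_1$ the angle is $\varphi+\chi_1=\frac{\pi}{2}+(v-\mu)$, so $t^{1/2}m_1=-2P_1\sin(v-\mu)+O(Q^{-15})$. This is negative, because $v>0>\mu$. For $m_1'$ the angle is $\varphi+\mu+\chi_1=\frac{\pi}{2}+v$, so $t^{1/2}m_1'=2P_1Q\sin v+O(Q^{-15})$. This is positive and of size $\asymp P_1\delta_0/Q$, since $\sin v\in[\delta_0/Q^2,2\delta_0/Q^2]$. For $j=2$ we add $\theta_0=\chi_2-\chi_1\in(0,\pi)$ to the angles. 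This gives $t^{1/2}m_2=-2P_2\sin(\theta_0+v-\mu)+\cdots$ and $t^{1/2}m_2'=2P_2Q\sin(\theta_0+v)+\cdots$. Both $v$ and $|\mu|$ are $O(Q^{-2})$ by \eqref{eqn.MuBd} and Lemma \ref{lem.Mult}(iii). So for $Q\ge Q_1$ both sines are at least $\frac12\sin\theta_0=\eta_0/2$, and the signs are $m_2<0<m_2'$. This gives (i) and the sign part of (ii); positivity of $\beta=u^k$ is trivial.

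\emph{The ratios.} From these formulas, $C_2=m_2'/|m_2|\approx Q\sin(\theta_0+v)/\sin(\theta_0+v-\mu)\gg Q$. The delicate ratio, and the step I expect to be the main obstacle, is
\[
C_1=\frac{Q\sin v}{\sin(v-\mu)}\approx\frac{Qv}{v+|\mu|}.
\]
This is where the choice of arc $\cA_\gamma$ is essential: $v$ must be of the same order as $|\mu|$ and no smaller. We have $v\ge v_0\asymp\delta_0/Q^2$ and $|\mu|\le\pi c_1/(2Q^2)$. So $v/(v+|\mu|)\ge\delta_0/(\delta_0+\pi c_1)$ up to constants, and hence $C_1\ge Q/c_3$. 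Taking $Q_1$ large makes $Q/c_3\ge 3$, which completes (ii).

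\emph{Hypotheses (iii)--(v) and the size of $\btau$.} Hypothesis (iii) holds because $|\beta'|=|\gamma|u^k\le u^k/2<\beta$. Hypothesis (v) is the last assertion of Lemma \ref{lem.Pair}. For (iv) the point is that $\beta=t$ cancels the factor $t^{-1/2}\cdot t^{-1/2}$ in $MN$:
\[
4MN\beta\approx 16P_1P_2\,Q^2\sin v\,\sin(\theta_0+v)\le 16P_1P_2\cdot 2\delta_0\cdot(1+o(1))=\tfrac14+o(1)<1,
\]
using $\sin 2v_0\le 2\sin v_0$ and $\delta_0=1/(128P_1P_2)$. Finally, $M\ll t^{-1/2}/Q$ and $N\le 2P_2Q\,t^{-1/2}+1$, which gives $\max\{M,N\}\le c_4Qu^{-k/2}$.
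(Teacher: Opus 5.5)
Your proposal is correct and follows essentially the same route as the paper. You substitute $\varphi=\frac{\pi}{2}-\chi_1-\mu+v$ into Lemma \ref{lem.Pair}, make the $O(t^{3/2})$ errors negligible via $k\ge c_2\log Q$, get $C_1\gg Q$ from $v\ge v_0$ being comparable to $|\mu|$, and get (iv) from the cancellation $MN\beta=(t^{1/2}m_1')(t^{1/2}m_2')\le 4P_1P_2Q^2\sin v\,(1+o(1))$.

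One harmless slip: the smallest main term is $t^{1/2}|m_1|=2P_1\sin(v+|\mu|)\asymp Q^{-2}$, not $\asymp 1/Q$. Your choice $t\le Q^{-10}$, giving errors $O(Q^{-15})$, still makes the errors negligible.
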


\begin{proof}
By Lemma \ref{lem.Mult}(iii) and \eqref{eqn.MuBd}, both $v_0$ and $|\mu|$ tend to $0$ as $Q\rar\infty$ along the sequence, so we may choose $Q_1$ so large that $Q^2\ge 2\delta_0$ and
\begin{equation}\label{eqn.SmallAngles}
2v_0+|\mu|\le\eta_0/2
\end{equation}
whenever $Q\ge Q_1$. Write $\varphi=\frac{\pi}{2}-\chi_1-\mu+v$ with $v_0\le v\le 2v_0$, and $s=v+|\mu|$, so that $0<v<s\le\eta_0/2\le 1/2$. Then $\varphi+\chi_1=\frac{\pi}{2}+s$, $\varphi+\mu+\chi_1=\frac{\pi}{2}+v$, $\varphi+\chi_2=\frac{\pi}{2}+\theta_0+s$ and $\varphi+\mu+\chi_2=\frac{\pi}{2}+\theta_0+v$. The main terms in \eqref{eqn.PairM} and \eqref{eqn.PairMp} are therefore
\begin{align*}
F_1&:=2P_1\cos(\varphi+\chi_1)=-2P_1\sin s,\\
F_1'&:=-2P_1Q\cos(\varphi+\mu+\chi_1)=2P_1Q\sin v,\\
F_2&:=2P_2\cos(\varphi+\chi_2)=-2P_2\sin(\theta_0+s),\\
F_2'&:=-2P_2Q\cos(\varphi+\mu+\chi_2)=2P_2Q\sin(\theta_0+v).
\end{align*}
Since $\sin$ is $1$-Lipschitz and $\sin\theta_0=\eta_0$, \eqref{eqn.SmallAngles} gives $\sin(\theta_0+s),\sin(\theta_0+v)\in[\eta_0/2,3\eta_0/2]$. Since $v_0\le v\le 2v_0\le\pi/3$ we also have $\sin v_0\le\sin v\le 2\sin v_0$. Consequently
\begin{equation}\label{eqn.MainTermSizes}
|F_1|\ge\frac{2P_1\delta_0}{Q^2},\qquad F_1'\ge\frac{2P_1\delta_0}{Q},\qquad |F_2|\ge P_2\eta_0,\qquad F_2'\ge P_2Q\eta_0 .
\end{equation}

\noindent\emph{The error terms:} By \eqref{eqn.PairM} and \eqref{eqn.PairMp}, and since $|\gamma|\le 1/2$, each of $t^{1/2}m_j$ and $t^{1/2}m_j'$ differs from the corresponding main term by at most $\max\{|\alpha_1^*|,|\alpha_2^*|\}\,u^{3k/2}$. In view of \eqref{eqn.MainTermSizes} there is a constant $c_2$ such that, if $Q\ge 2$ and $k\ge c_2\log Q$, this quantity is at most one tenth of each of $|F_1|,F_1',|F_2|,F_2'$. For such $k$ each of $t^{1/2}m_1$, $t^{1/2}m_2$, $t^{1/2}m_1'$, $t^{1/2}m_2'$ has the same sign as its main term, and lies within a factor of $[\tfrac{9}{10},\tfrac{11}{10}]$ of it. We assume from now on that $Q_1\ge 2$ and that $k\ge c_2\log Q$.

\noindent\emph{The hypotheses of Proposition \ref{prop.Ladder}:} Hypothesis (i) holds because $\beta=u^k>0$ and $F_1,F_2<0$, and $m_1',m_2'>0$ because $F_1',F_2'>0$. Hypothesis (iii) holds because $|\beta'|=|\gamma|u^k\le\beta/2$, and (v) is the last assertion of Lemma \ref{lem.Pair}. For (ii), we have that
\[
C_1=\frac{m_1'}{|m_1|}\ge\frac{9}{11}\cdot\frac{F_1'}{|F_1|}=\frac{9}{11}\cdot\frac{Q\sin v}{\sin s},\qquad
C_2=\frac{m_2'}{|m_2|}\ge\frac{9}{11}\cdot\frac{F_2'}{|F_2|}\ge\frac{9}{11}\cdot\frac{Q\eta_0/2}{3\eta_0/2}=\frac{3Q}{11}.
\]
Since $\sin s=\sin(v+|\mu|)\le\sin v+|\mu|$ and $\sin v\ge\sin v_0$,
\[
\frac{\sin v}{\sin s}\ge\frac{1}{1+|\mu|/\sin v}\ge\frac{1}{1+|\mu|/\sin v_0},
\]
and by \eqref{eqn.MuBd} and Lemma \ref{lem.Mult}(iii), $|\mu|/\sin v_0\le\frac{\pi c_1}{2Q^2}\cdot\frac{Q^2}{\delta_0}=64\pi P_1P_2c_1$. Therefore $\min\{C_1,C_2\}\ge Q/c_3$ with $c_3=\max\{\frac{11}{9}(1+64\pi P_1P_2c_1),\frac{11}{3}\}$, and (ii) holds provided that $Q_1\ge 3c_3$. For (iv), we have that
\begin{align*}
MN\beta=(t^{1/2}m_1')(t^{1/2}m_2')&\le\Bigl(\frac{11}{10}\Bigr)^2F_1'F_2'
\le\frac{121}{100}\cdot4P_1P_2Q^2\cdot2\sin v_0\cdot\frac{3\eta_0}{2}\\
&=\frac{121}{100}\cdot 12P_1P_2\eta_0\delta_0\le\frac{121\cdot 12}{100\cdot 128}<\frac14 .
\end{align*}

\noindent\emph{The size of $\btau$:} Finally,
\begin{align*}
N=m_2'&\le\frac{11}{10}F_2'\,t^{-1/2}\le\frac{11}{10}\cdot 2P_2Q\cdot\frac{3\eta_0}{2}\,u^{-k/2}\le 4P_2\eta_0\,Q\,u^{-k/2},\\
M=m_1'&\le\frac{11}{10}F_1'\,t^{-1/2}\le\frac{11}{10}\cdot 2P_1Q\cdot\frac{2\delta_0}{Q^2}\,u^{-k/2}\le 5P_1\delta_0\,u^{-k/2},
\end{align*}
so we may take $c_4=\max\{4P_2\eta_0,5P_1\delta_0\}$. The inequalities \eqref{eqn.WindowOut} now follow from Proposition \ref{prop.Ladder}.
\end{proof}

Proposition \ref{prop.Window} reduces Theorem \ref{thm.Complex} to the question of how long the sequence $k\vartheta\bmod 1$ takes to enter the image of the arc $\cA_\gamma$ under division by $2\pi$, an arc of $\R/\Z$ of length $v_0/2\pi\ge\delta_0/(2\pi Q^2)$. To obtain a rate we need two more lemmas.

\begin{lemma}\label{lem.Hitting}
Let $\vartheta$ be irrational and let $\Phi:\N\rar(0,1]$ be non-increasing with $\|q\vartheta\|_{\R/\Z}\ge\Phi(q)$ for every $q\in\N$. Let $I\subseteq\R/\Z$ be an arc of length $\ell\in(0,1)$, let $k_0\ge 0$ be an integer, and put $R=\lceil 1/\ell\rceil$. Then there is an integer $k$ with
\[
k_0<k\le k_0+R\bigl(\Phi(R)^{-1}+1\bigr)\qquad\text{and}\qquad k\vartheta\bmod 1\in I .
\]
\end{lemma}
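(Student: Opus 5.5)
Since the statement is invariant under translation of $k$ (replacing $I$ by $I-k_0\vartheta$), we may assume $k_0=0$ and look for $k$ with $1\le k\le R(\Phi(R)^{-1}+1)$ and $k\vartheta\bmod 1\in I$. The idea is to find a step $q\le R$ whose multiples $q\vartheta$ move around the circle in increments of length less than $\ell$, but not too small. We then follow an arithmetic progression $k=k_1+jq$ until it lands in $I$.

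\textbf{Step 1: a small step.} Apply Dirichlet's theorem with parameter $R$. Among the $R+1$ points $0,\vartheta,\dots,R\vartheta \bmod 1$, two lie within $1/(R+1)$ of each other. So there is $q$ with $1\le q\le R$ and
\[
\|q\vartheta\|_{\R/\Z}<\frac{1}{R+1}<\frac{1}{R}\le \ell .
\]
By the hypothesis on $\Phi$ and monotonicity, $\|q\vartheta\|_{\R/\Z}\ge\Phi(q)\ge\Phi(R)>0$.

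\textbf{Step 2: walking into $I$.} Write $\rho=\|q\vartheta\|_{\R/\Z}$, so that $q\vartheta\equiv\pm\rho \bmod 1$. For any starting integer $k_1\in\{1,\dots,q\}$, the points $(k_1+jq)\vartheta$ for $j=0,1,2,\dots$ move monotonically around the circle in one direction, each step of length $\rho<\ell$. They therefore cannot jump over the arc $I$, whose length is $\ell$. Within $\lceil 1/\rho\rceil$ steps they cover a full revolution, so some $j$ with $0\le j\le\lceil 1/\rho\rceil-1\le 1/\rho$ gives a point in $I$.

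To make this precise, take $k_1=q$ (the single choice suffices). Some $j\le\lfloor 1/\rho\rfloor$ has $(q+jq)\vartheta\in I$. This holds because the cumulative displacement $j\rho$ ranges over a grid of mesh $\rho<\ell$ covering $[0,1)$ once $j$ reaches $\lceil 1/\rho\rceil-1$, so every arc of length $\ell$ contains a grid point. We need care with the closed/half-open endpoints of $I$; the strict inequality $\rho<\ell$ handles this.

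\textbf{Step 3: the bound.} The resulting $k=q(1+j)$ satisfies
\[
1\le k\le q\bigl(1+1/\rho\bigr)\le R\bigl(\Phi(R)^{-1}+1\bigr),
\]
which is the stated range after translating back by $k_0$. The only delicate step is the covering claim in Step 2. It is elementary, but one must check it carefully: the displacements all go in one direction (the sign of $q\vartheta$ relative to its nearest integer), and the count of steps must be at most $1/\rho$ rather than $1/\rho+1$. The latter is handled by starting at $j=0$ and noting that $\lceil1/\rho\rceil$ consecutive grid points already have spacing less than $\ell$ around the whole circle.
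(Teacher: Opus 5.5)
Your proof is correct and follows essentially the same route as the paper: reduce to $k_0=0$, use Dirichlet's theorem to get $q\le R$ with $\Phi(R)\le\|q\vartheta\|_{\R/\Z}<\ell$, and then walk along the multiples $q,2q,3q,\dots$ until you land in $I$, which takes at most $1/\rho+1$ steps. Your grid/gap-covering justification of the last step is equivalent to the paper's argument, which takes the least $j$ with $j\varepsilon>a$; that choice gives $a<j\varepsilon\le a+\delta<a+\ell$ directly and avoids the endpoint bookkeeping.
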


\begin{proof}
Replacing $I$ by $I-k_0\vartheta$, we may assume that $k_0=0$. By Dirichlet's theorem there is an integer $q$ with $1\le q\le R$ and $\|q\vartheta\|_{\R/\Z}<1/R\le\ell$. Put $\delta=\|q\vartheta\|_{\R/\Z}>0$, and let $\varepsilon=q\vartheta-p$, where $p$ is the nearest integer to $q\vartheta$, so that $|\varepsilon|=\delta$ and $jq\vartheta\equiv j\varepsilon\bmod 1$ for every $j$. Suppose, with little loss of generality, that $\varepsilon>0$. Write $I=[a,a+\ell)\bmod 1$ with $a\in[0,1)$, and let $j\ge 1$ be the least integer with $j\varepsilon>a$. Then $(j-1)\varepsilon\le a$, so $a<j\varepsilon\le a+\delta<a+\ell$, and $j\varepsilon\bmod 1\in I$. Moreover $j\le a/\varepsilon+1<1/\delta+1$, so $k=jq$ satisfies $k\le R(\Phi(R)^{-1}+1)$, because $\delta\ge\Phi(q)\ge\Phi(R)$.
\end{proof}

The second lemma is the one place where we use a deep result, a lower bound from Baker's theory on linear forms in logarithms. We use it in the following form.

\begin{theorem*}
Let $\xi$ be an algebraic number with $|\xi|=1$ which is not a root of unity. Then there are effectively computable constants $c_\xi>0$ and $C_\xi\ge 1$, depending only on $\xi$, such that $|\xi^q-1|\ge c_\xi\,q^{-C_\xi}$ for every $q\in\N$.
\end{theorem*}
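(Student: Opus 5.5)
We prove this with the Baker--W\"ustholz lower bound for linear forms in two logarithms, applied to $\log\xi$ and $\log(-1)=i\pi$. Write $\xi=e^{i\theta}$ with $\theta\in(-\pi,\pi]$. For $q\in\N$ let $p\in\Z$ be a nearest integer to $q\theta/(2\pi)$, so that $|q\theta-2\pi p|\le\pi$. Then
\[
|\xi^q-1|=2\bigl|\sin\tfrac{q\theta-2\pi p}{2}\bigr|\ge\frac{2}{\pi}\,|q\theta-2\pi p|,
\]
using $|\sin x|\ge 2|x|/\pi$ for $|x|\le\pi/2$. So it suffices to bound from below
\[
\Lambda_q=q\log\xi-2p\log(-1)=i(q\theta-2\pi p),
\]
where $\log$ denotes the principal branch, so that $\log\xi=i\theta$ and $\log(-1)=i\pi$.

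First, $\Lambda_q\neq0$. Otherwise $q\theta=2\pi p$, so $\xi^q=1$, contradicting the hypothesis that $\xi$ is not a root of unity. Second, the integer coefficients are bounded linearly in $q$: since $|\theta|\le\pi$ we have $|p|\le q/2+1$, so $B:=\max\{q,2|p|\}\le q+2\le 3q$.

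Now apply the Baker--W\"ustholz theorem to the nonzero form $\Lambda_q$ in the logarithms of the algebraic numbers $\xi$ and $-1$. The relevant data are the degree $D=[\Q(\xi,-1):\Q]=[\Q(\xi):\Q]$ and the modified heights of $\xi$ and $-1$. The heights are bounded in terms of $\xi$ alone, because the principal values $|\log\xi|=|\theta|\le\pi$ and $|\log(-1)|=\pi$ are bounded. The theorem then gives
\[
\log|\Lambda_q|\ge -C'\log(eB)
\]
for an effectively computable $C'$ depending only on $\xi$ (through $D$ and the two heights). Hence
\[
|\Lambda_q|\ge(3eq)^{-C'}.
\]
Combining this with the displayed inequality above yields $|\xi^q-1|\ge c_\xi q^{-C_\xi}$ with $C_\xi=\max\{C',1\}$ and $c_\xi=\tfrac{2}{\pi}(3e)^{-C'}$.

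The only delicate step is the bookkeeping needed to match the hypotheses of Baker--W\"ustholz. One must check that the chosen branches of the logarithms are the ones the theorem allows, or equivalently pass to a form with the standard normalisation. One must also check that the dependence on $B$ is only through $\log B$, which is what produces the polynomial rate in $q$. The non-vanishing of $\Lambda_q$ and the bound $B\ll q$ are immediate.
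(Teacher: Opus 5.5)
Your proposal is correct and is essentially the paper's argument: the same linear form $q\log\xi-2p\log(-1)$ with $p$ the nearest integer, the same reduction $|\xi^q-1|\ge 2|\Lambda_q|/\pi$, and Baker--W\"ustholz applied with coefficients bounded linearly in $q$. Writing the bound as $-C'\log(eB)$ is a sensible touch, because it covers the case $B=1$ where $\log B$ vanishes; the paper handles the same point by bounding the coefficients by $q+1$.
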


This follows from the theorem of Baker and W\"ustholz \cite{BakeWust1993}. Let $\log$ denote the principal branch of the logarithm, so that $\log(-1)=i\pi$, let $q\in\N$, let $p$ be the integer nearest to $q\log\xi/(2\pi i)$, and put
\[
\Lambda=q\log\xi-2p\log(-1).
\]
Then $\Lambda$ is purely imaginary with $|\Lambda|\le\pi$, and it is non-zero because $\xi$ is not a root of unity. Since $\xi^q=e^{\Lambda}$ and $|e^{ix}-1|=2|\sin(x/2)|\ge 2|x|/\pi$ for $|x|\le\pi$, we have that $|\xi^q-1|\ge 2|\Lambda|/\pi$. Finally, $|p|\le(q+1)/2$, so both coefficients of $\Lambda$ are at most $q+1$ in absolute value, and the theorem of Baker and W\"ustholz, applied to $\Lambda$ as a linear form in the logarithms of the algebraic numbers $\xi$ and $-1$, gives $|\Lambda|\ge(q+1)^{-C}$ for a constant $C$ depending only on $\xi$.

\begin{lemma}\label{lem.Baker}
There are effectively computable constants $c_5>0$ and $C\ge 1$ such that
\[
\|q\vartheta\|_{\R/\Z}\ge c_5\,q^{-C}\qquad\text{for every }q\in\N .
\]
\end{lemma}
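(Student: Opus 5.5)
We will apply the Baker–Wüstholz consequence stated just before Lemma \ref{lem.Baker} with $\xi=\sigma(u)/|\sigma(u)|=e^{2\pi i\vartheta}$.

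The first step is to check that $\xi$ is algebraic. By Lemma \ref{lem.Unit} we have $|\sigma(u)|=u^{-1/2}$. Hence
\[
\xi=u^{1/2}\sigma(u)\quad\text{and}\quad \xi^2=u\,\sigma(u)^2 .
\]
Here $u$ and $\sigma(u)$ are algebraic numbers, and square roots of algebraic numbers are algebraic. So $\xi$ is algebraic with $|\xi|=1$.

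The second step is to check that $\xi$ is not a root of unity. If $\xi^q=1$ for some $q\in\N$, then $q\vartheta\in\Z$, which contradicts the irrationality of $\vartheta$ in Lemma \ref{lem.Unit}. The quoted theorem therefore applies and gives effectively computable constants $c_\xi>0$ and $C_\xi\ge1$ with
\[
|\xi^q-1|\ge c_\xi q^{-C_\xi}\qquad\text{for all } q\in\N .
\]

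The third step converts this into a bound for $\|q\vartheta\|_{\R/\Z}$. Let $p$ be the integer nearest to $q\vartheta$. Then
\[
|\xi^q-1|=|e^{2\pi i(q\vartheta-p)}-1|=2|\sin(\pi\|q\vartheta\|_{\R/\Z})|\le 2\pi\|q\vartheta\|_{\R/\Z}.
\]
Combining this with the previous step gives
\[
\|q\vartheta\|_{\R/\Z}\ge \frac{c_\xi}{2\pi}\,q^{-C_\xi}.
\]
We take $c_5=c_\xi/(2\pi)$ and $C=C_\xi$; both are effective and depend only on $u$, hence only on $\balph$.

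The only substantive input is the Baker–Wüstholz theorem. It is already packaged in the form we need, so the single point requiring care is confirming that $\xi$ is algebraic and not a root of unity, which the first two steps do.
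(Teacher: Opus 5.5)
Your proof is correct and is essentially the paper's argument. The paper takes $\xi=\sigma(u)/\overline{\sigma(u)}=e^{4\pi i\vartheta}$, which is exactly the square of your $\xi=u^{1/2}\sigma(u)=e^{2\pi i\vartheta}$ (since $u|\sigma(u)|^2=1$); it gets algebraicity for free as a quotient of conjugates of $u$, at the cost of the extra step $\|2q\vartheta\|_{\R/\Z}\le 2\|q\vartheta\|_{\R/\Z}$, whereas you need the (valid) observation that $u^{1/2}$ is algebraic. The paper's choice also keeps $\xi$ inside the normal closure of $K$, of degree at most $6$, which its closing remark on bypassing Baker's theorem relies on.
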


\begin{proof}
Put $\xi=\sigma(u)/\overline{\sigma(u)}$. This is an algebraic number, being a quotient of two conjugates of $u$, it has $|\xi|=1$, and $\xi=e^{2i\arg\sigma(u)}=e^{4\pi i\vartheta}$. It is not a root of unity, since $\xi^n=1$ would force $2n\vartheta\in\Z$, contradicting Lemma \ref{lem.Unit}. By the quoted theorem,
\[
c_\xi q^{-C_\xi}\le|\xi^q-1|=2|\sin(2\pi q\vartheta)|\le 2\pi\|2q\vartheta\|_{\R/\Z}\le 4\pi\|q\vartheta\|_{\R/\Z},
\]
which gives the lemma with $c_5=c_\xi/4\pi$ and $C=C_\xi$.
\end{proof}

\noindent\emph{Proof of Theorem \ref{thm.Complex}:} Let $\gamma$ be an element of the sequence in Lemma \ref{lem.Mult} with $Q\ge Q_1$, let $\cA_\gamma$ be the arc \eqref{eqn.ArcDef}, and let $I\subseteq\R/\Z$ be the image of $\cA_\gamma$ under division by $2\pi$, an arc of length
\[
\ell=\frac{v_0}{2\pi}\ge\frac{\sin v_0}{2\pi}=\frac{\delta_0}{2\pi Q^2}.
\]
Apply Lemma \ref{lem.Hitting} with $k_0=\lceil c_2\log Q\rceil$, with $\Phi(q)=c_5q^{-C}$ from Lemma \ref{lem.Baker} (decreasing $c_5$ if necessary so that $\Phi\le 1$), and with $R=\lceil 1/\ell\rceil\le 2\pi Q^2/\delta_0+1$. This produces an integer $k\ge c_2\log Q$ with $2\pi k\vartheta\in\cA_\gamma$ modulo $2\pi$ and
\[
k\le c_2\log Q+1+R\bigl(c_5^{-1}R^{C}+1\bigr)\le c_6\,Q^{2(C+1)},
\]
for a constant $c_6$ depending only on $\balph$ and $u$. By Proposition \ref{prop.Window} there is $\btau=(M,N)\in\N^2$ with
\[
G(\balph,\btau)\ge\frac{Q}{c_3}-1\qquad\text{and}\qquad\log|\btau|\le\frac{k}{2}\log(1/u)+\log(c_4Q)\le c_7\,Q^{2(C+1)}.
\]
The second inequality gives $Q\ge(\log|\btau|/c_7)^{1/(2(C+1))}$, and substituting this into the first gives
\[
G(\balph,\btau)\ge c\,(\log|\btau|)^{\kappa},\qquad\kappa=\frac{1}{2(C+1)},
\]
for a suitable $c>0$, provided that $Q$ is sufficiently large. Letting $\gamma$ run through the sequence of Lemma \ref{lem.Mult}, we have $Q\rar\infty$, so $G(\balph,\btau)\rar\infty$. This completes the proof of Theorem \ref{thm.Complex}.

We close with a remark. Baker's theorem is used only for the rate, and it is possible to circumvent it, at the cost of a more slowly growing lower bound for the gaps function. Both $\sigma(u)$ and $\overline{\sigma(u)}$ are algebraic units, so the number $\xi$ in the proof of Lemma \ref{lem.Baker} is a unit in the normal closure of $K$, a field of degree at most $6$ over $\Q$. For $q\in\N$ the number $\xi^q-1$ is a non-zero algebraic integer whose conjugates are bounded in absolute value by $H^q+1$, where $H>1$ is the largest absolute value of a conjugate of $\xi$. Taking norms gives $|\xi^q-1|\ge(H^q+1)^{-5}$, and hence $\|q\vartheta\|_{\R/\Z}\gg H^{-5q}$ in place of Lemma \ref{lem.Baker}. The argument above then gives $G(\balph,\btau)\gg(\log\log|\btau|)^{1/2}$ for infinitely many $\btau$.

\vspace{.15in}
{\footnotesize
\noindent
Department of Mathematics\\
University of Houston\\
Houston, TX, United States\\
haynes@math.uh.edu
}

\end{document}